\newcommand{\nc}{\newcommand}
\nc{\fg}{\mathfrak{f} } \nc{\vg}{\mathfrak{v} } \nc{\wg}{\mathfrak{w} }
\nc{\zg}{\mathfrak{z} } \nc{\ngo}{\mathfrak{n} } \nc{\kg}{\mathfrak{k} }
\nc{\mg}{\mathfrak{m} } \nc{\bg}{\mathfrak{b} } \nc{\ggo}{\mathfrak{g} }
\nc{\ggob}{\overline{\mathfrak{g}} } \nc{\sog}{\mathfrak{so} }
\nc{\sug}{\mathfrak{su} } \nc{\spg}{\mathfrak{sp} } \nc{\slg}{\mathfrak{sl} }
\nc{\glg}{\mathfrak{gl} } \nc{\cg}{\mathfrak{c} } \nc{\rg}{\mathfrak{r} }
\nc{\hg}{\mathfrak{h} } \nc{\tgo}{\mathfrak{t} } \nc{\ug}{\mathfrak{u} }
\nc{\dg}{\mathfrak{d} } \nc{\ag}{\mathfrak{a} } \nc{\pg}{\mathfrak{p} }
\nc{\sg}{\mathfrak{s} } \nc{\affg}{\mathfrak{aff} } \nc{\qg}{\mathfrak{q} }
\nc{\pca}{\mathcal{P}} \nc{\nca}{\mathcal{N}} \nc{\lca}{\mathcal{L}}
\nc{\oca}{\mathcal{O}} \nc{\mca}{\mathcal{M}} \nc{\tca}{\mathcal{T}}
\nc{\aca}{\mathcal{A}} \nc{\cca}{\mathcal{C}} \nc{\gca}{\mathcal{G}}
\nc{\sca}{\mathcal{S}} \nc{\hca}{\mathcal{H}} \nc{\bca}{\mathcal{B}}
\nc{\dca}{\mathcal{D}} \nc{\val}{\operatorname{val}}
\nc{\vp}{\varphi} \nc{\ddt}{\frac{d}{dt}} \nc{\dds}{\frac{d}{ds}}
\nc{\dpar}{\frac{\partial}{\partial t}} \nc{\im}{\mathrm{i}}
\nc{\SO}{\mathrm{SO}} \nc{\Spe}{\mathrm{Sp}} \nc{\Sl}{\mathrm{SL}}
\nc{\SU}{\mathrm{SU}} \nc{\Or}{\mathrm{O}} \nc{\U}{\mathrm{U}} \nc{\Gl}{\mathrm{GL}}
\nc{\Se}{\mathrm{S}} \nc{\Cl}{\mathrm{Cl}} \nc{\Spein}{\mathrm{Spin}}
\nc{\Pin}{\mathrm{Pin}} \nc{\G}{\mathrm{GL}_n(\RR)} \nc{\g}{\mathfrak{gl}_n(\RR)}
\nc{\RR}{{\Bbb R}} \nc{\HH}{{\Bbb H}} \nc{\CC}{{\Bbb C}} \nc{\ZZ}{{\Bbb Z}}
\nc{\FF}{{\Bbb F}} \nc{\NN}{{\Bbb N}} \nc{\QQ}{{\Bbb Q}} \nc{\PP}{{\Bbb P}} \nc{\OO}{{\Bbb O}}
\nc{\vs}{\vspace{.2cm}} \nc{\vsp}{\vspace{1cm}} \nc{\ip}{\langle\cdot,\cdot\rangle}
\nc{\ipp}{(\cdot,\cdot)} \nc{\la}{\langle} \nc{\ra}{\rangle} \nc{\unm}{\tfrac{1}{2}}
\nc{\unc}{\tfrac{1}{4}} \nc{\und}{\tfrac{1}{16}} \nc{\no}{\vs\noindent}
\nc{\lam}{\Lambda^2(\RR^n)^*\otimes\RR^n} \nc{\tangz}{{\rm T}^{\rm Zar}}
\nc{\nor}{{\sf n}}  \nc{\mum}{/\!\!/} \nc{\kir}{/\!\!/\!\!/}
\nc{\Ri}{\tfrac{4\Ric_{\mu}}{||\mu||^2}} \nc{\ds}{\displaystyle}
\nc{\ben}{\begin{enumerate}} \nc{\een}{\end{enumerate}} \nc{\f}{\frac}
\nc{\lb}{[\cdot,\cdot]} \nc{\isn}{\tfrac{1}{||v||^2}}
\nc{\gkp}{(\ggo=\kg\oplus\pg,\ip)} \nc{\ukh}{(\ug=\kg\oplus\hg,\ip)}
\nc{\tgkp}{(\tilde{\ggo}=\kg\oplus\pg,\ip)}
\nc{\wt}{\widetilde}
\nc{\iop}{\mathtt{i}} \nc{\jop}{\mathtt{j}}
\nc{\alp}{\alpha^2}  \nc{\bet}{\beta^2}  \nc{\gam}{\gamma^2} \nc{\et}{\eta^2}
\nc{\Hess}{\operatorname{Hess}} \nc{\ad}{\operatorname{ad}}
\nc{\Ad}{\operatorname{Ad}} \nc{\rank}{\operatorname{rank}}
\nc{\Irr}{\operatorname{Irr}} \nc{\End}{\operatorname{End}}
\nc{\Aut}{\operatorname{Aut}} \nc{\Inn}{\operatorname{Inn}}
\nc{\Der}{\operatorname{Der}} \nc{\Ker}{\operatorname{Ker}}
\nc{\Iso}{\operatorname{Iso}} \nc{\Diff}{\operatorname{Diff}}
\nc{\Lie}{\operatorname{L}} \nc{\tr}{\operatorname{tr}} \nc{\dif}{\operatorname{d}}
\nc{\sen}{\operatorname{sen}} \nc{\modu}{\operatorname{mod}}
\nc{\CRic}{\operatorname{PP}} \nc{\Cric}{\operatorname{P}} \nc{\Ricci}{\operatorname{Ric}}
\nc{\sym}{\operatorname{sym}} \nc{\herm}{\operatorname{herm}} \nc{\symac}{\operatorname{sym^{ac}}}
\nc{\symc}{\operatorname{sym^{c}}} \nc{\scalar}{\operatorname{scal}}
\nc{\grad}{\operatorname{grad}} \nc{\ricci}{\operatorname{Rc}}
\nc{\Nor}{\operatorname{Norm}}  \nc{\ricc}{\operatorname{Rc^{c}}}
\nc{\Ricc}{\operatorname{Ric^{c}}} \nc{\ricac}{\operatorname{Rc^{ac}}}
\nc{\Ricac}{\operatorname{Ric^{ac}}} \nc{\Riem}{\operatorname{Rm}} \nc{\Sec}{\operatorname{Sec}}
\nc{\riccig}{\operatorname{ric^{\gamma}}} \nc{\Rin}{\operatorname{M}}
\nc{\Le}{\operatorname{L}} \nc{\tang}{\operatorname{T}}
\nc{\level}{\operatorname{level}} \nc{\rad}{\operatorname{r}}
\nc{\abel}{\operatorname{ab}} \nc{\CH}{\operatorname{CH}} \nc{\Cone}{{\mathcal C}} \nc{\CCone}{\operatorname{CC}} \nc{\CP}{{\mathcal P}}
\nc{\mcc}{\operatorname{mcc}} \nc{\Adj}{\operatorname{Adj}}
\nc{\Order}{\operatorname{O}}  \nc{\inj}{\operatorname{inj}} \nc{\proy}{\operatorname{pr}}
\nc{\vol}{\operatorname{vol}} \nc{\Diag}{\operatorname{Dg}} \nc{\Diagg}{\operatorname{Diag}}
\nc{\Spec}{\operatorname{Spec}} \nc{\Ima}{\operatorname{Im}} \nc{\Rea}{\operatorname{Re}}
\nc{\spann}{\operatorname{span}} \nc{\Aff}{\operatorname{Aff}}
\nc{\mm}{\operatorname{m}}
\newtheorem{theorem}{Theorem}[section]
\newtheorem{proposition}[theorem]{Proposition}
\newtheorem{corollary}[theorem]{Corollary}
\newtheorem{lemma}[theorem]{Lemma}
\theoremstyle{definition}
\newtheorem{definition}[theorem]{Definition}
\theoremstyle{remark}
\newtheorem{remark}[theorem]{Remark}
\title{On Ricci negative derivations}
\author{Mar\'{\i}a Valeria Guti\'{e}rrez}
\address{Universidad Nacional de C\'ordoba, FAMAF and CIEM, 5000 C\'ordoba, Argentina}
\email{valeria.gutierrez@mi.unc.edu.ar}
\thanks{Partially supported by a CONICET doctoral fellowship and a Consejo Interuniversitario Nacional fellowship}
\begin{document}

 \maketitle

\begin{abstract}
Given a nilpotent Lie algebra, we study the space of all diagonalizable derivations such that the corresponding one-dimensional solvable extension admits a left-invariant metric with negative Ricci curvature.  It has been conjectured by Lauret-Will that such a space coincides with an open and convex subset of derivations defined in terms of the moment map for the variety of nilpotent Lie algebras.  We prove the validity of the conjecture in dimension $\leq 5$, as well as for Heisenberg and standard filiform Lie algebras.
\end{abstract}

\tableofcontents

\section{Introduction}
 Even though in the general case there are no topological obstructions on a differentiable manifold $M$ to the existence of a complete Riemannian metric with $\Ricci<0$, in the homogeneous case, this is the only curvature behavior which is still far from being understood. Given a Lie group $G$, a nice relationship between any prescribed curvature behavior of left-invariant metrics, the topology of $G$ and also the structure of its Lie algebra $\ggo$ is expected. When we consider a Lie group all the metrics are assumed to be left invariant.

  In \cite{DttLtMtl}, Dotti, Leite and Miatello proved that if a unimodular Lie group $G$ admits a $\Ricci<0$ metric, then $G$ is non-compact and semisimple. They also showed that most of non-compact, simple Lie groups indeed have one, with some low dimensional exceptions. On the other hand, it was proved by Jablonski and Petersen \cite{JblPtr} that a semisimple Lie group admitting a metric with $\Ricci<0$ can not have compact factors.

  In 2016, Will constructed unexpected examples of Lie groups admitting a $\Ricci<0$ metric which are neither semisimple nor solvable \cite{Wll1, Wll2}. Furthermore, a general construction in  \cite{Wll2} shows that any non-compact, semisimple Lie group admitting a $\Ricci<0$ metric can be the Levi factor of a non-semisimple Lie group with a $\Ricci<0$ metric, and in \cite{LrtWll} it is proved that any compact semisimple Lie group can be the Levi factor of a Lie group having a $\Ricci<0$ metric. All this shows that an algebraic characterization of Lie groups admitting this kind of metrics is out of reach at the moment.

 In the solvable case, Nikolayevsky and Nikonorov obtained a sufficient condition on a solvable Lie group $S$, with Lie algebra $\sg$, for the existence of a metric on $S$  with $\Ricci<0$: there exists $Y\in\sg$ such that all the eigenvalues of the restriction of $\ad{Y}$ to the nilradical of $\sg$ have positive real part. They also prove a necessary condition, which says that if $S$ admits a metric of negative Ricci curvature, then there exists $Y\in\sg$ such that $\tr{\ad{Y}}>0$ and all the eigenvalues of the restriction of the operator $\ad{Y}$ to the center of the nilradical have a positive real part.

It has been shown in \cite{RN} that the class of those nilpotent Lie algebras that can be the nilradical of some solvable Lie algebra admitting a $\Ricci<0$ metric is really far from being understood. We consider instead the following question.
\begin{description}\label{P2}
\item[Question 1]
Given a nilpotent Lie algebra $\ngo$, which are the solvable Lie algebras with nilradical $\ngo$ admitting a $\Ricci<0$ metric?
\end{description}

The following conjecture was proposed in \cite{NklNkn,Nkl}, see also \cite{RNder}.
\begin{quote}
  For each nilpotent Lie algebra $\ngo$, there is an open and convex cone $\Cone$ in the maximal torus of derivations of $\ngo$ such that a solvable Lie algebra $\sg$ with nilradical $\ngo$ admits a $\Ricci<0$ metric if and only if there exists $Y\in\sg$ such that $\ad{Y}|_\ngo^\RR\in \Cone$ (up to automorphism conjugation).
\end{quote}

In the context of solvable Lie groups, strongly Ricci negative derivations of a nilpotent Lie algebra $\ngo$ were defined in \cite{RNder} in order to study this problem by considering one-dimensional solvable extensions of $\ngo$.
\begin{definition}[see Definition 2.1]
A derivation $D$ of a nilpotent Lie algebra $\ngo$ with $\tr{D}>0$ is said to be {\it strongly Ricci negative} if the one-dimensional solvable extension Lie algebra $\sg_D=\RR f\oplus\ngo$ ($\ad{f}|_{\ngo}=D$) admits an inner product of negative Ricci curvature such that $D^t=D$ and $f\perp\ngo$.
\end{definition}
If we denote by $\tgo(\ngo)$ the maximal torus of diagonalizable (over $\RR$) derivations of $\ngo$, we call $\tgo(\ngo)_{srn}$ the set of all derivations in $\tgo(\ngo)$ which are strongly Ricci negative. In \cite{RNder} it is also defined the following open and convex cone $\Cone(\ngo)$ consisting of this kind of derivations:
\begin{equation*}
\Cone(\ngo):=\left(\RR_{>0}\, \mm\left(\overline{G_{\tgo(\ngo)}\cdot\lb}\right)\cap\Diag(\ngo) + \Diag(\ngo)_{>0}\right)\cap\tgo(\ngo)_{\tr>0},
\end{equation*}
where $G_{\tgo(\ngo)}$ denotes the connected component of the centralizer of $\tgo(\ngo)$ in $\Gl(\ngo)$, which acts on $V:=\Lambda^2\ngo^*\otimes\ngo$ by $g\cdot\mu:=g\mu(g^{-1}\cdot,g^{-1}\cdot)$. We denote by $\mm$ the moment map for the $\Gl(\ngo)$-representation on $V$ defined by the derivative of this action. The vector spaces $\Diag(\ngo)$ and $\Diag(\ngo)_{>0}$ are those of diagonal operators (in terms of a fixed basis of $\ngo$) and positive diagonal operators of $\ngo$, respectively, and $\tgo(\ngo)_{\tr>0}:=\{D\in \tgo(\ngo) : \tr D >0\}$.

The following inclusions are proved in \cite{RNder}:
$$\tgo(\ngo)_{srn}\cap\tgo(\ngo)_{gen}\subset\Cone(\ngo)\subset\tgo(\ngo)_{srn},$$
where $\tgo(\ngo)_{gen}$ are the generic derivations in $\tgo(\ngo)$. In view of this, given a nilpotent Lie algebra $\ngo$, it is natural to expect the validity of the following
\begin{description}
  \item[Conjecture 1] $\Cone(\ngo)=\tgo(\ngo)_{srn}.$
\end{description}
We refer to Section \ref{preli} for a more detailed treatment. Our aim in this paper is to prove the validity of the above conjecture among three families of nilpotent Lie algebras.

In Section \ref{conosdim5}, we focus on nilpotent Lie algebras of dimension 5 and we obtain a complete description of the cone $\Cone(\ngo)$ in each case. After that, we prove our main result in Section \ref{theodim5}.

\begin{theorem}[see Theorem 4.4]
 For any nilpotent Lie algebra $\ngo$ of dimension $5$, we have that
    $\tgo(\ngo)_{srn}=\Cone(\ngo)$.
  \end{theorem}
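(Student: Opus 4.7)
The plan is to establish the non-trivial inclusion $\tgo(\ngo)_{srn}\subset\Cone(\ngo)$, since the reverse inclusion is already known from \cite{RNder}. I would start by invoking the classification of nilpotent Lie algebras of dimension $5$ and working case by case, exploiting the explicit description of $\Cone(\ngo)$ obtained in Section \ref{conosdim5}.

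For each algebra $\ngo$ in the list, fix a basis that diagonalizes $\tgo(\ngo)$ and write $D\in\tgo(\ngo)$ with eigenvalues $(d_1,\dots,d_5)$; Section \ref{conosdim5} then presents $\Cone(\ngo)$ as the open set defined by $\tr D>0$ together with a finite list of linear inequalities $L_1(d)>0,\dots,L_r(d)>0$. Taking $D\in\tgo(\ngo)$ with $\tr D>0$ that violates one of these, say $L_j(D)\leq 0$, I must show that no inner product $\ip$ on $\sg_D$ with $D=D^t$ and $f\perp\ngo$ yields $\Ricci<0$. The strategy is to exhibit, using the coefficients of $L_j$, a specific tangent vector $v\in\sg_D$ such that $\ip(\Ricci v,v)\geq 0$ for every admissible metric, contradicting strong Ricci negativity.

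This is made concrete by writing the Ricci quadratic form of $\sg_D$ via the standard formula for left-invariant metrics on a one-dimensional solvable extension of a nilpotent Lie algebra, and then analyzing the sign of its restriction to the subspace picked out by the violated half-space. The linear functionals $L_j$ cutting out $\Cone(\ngo)$ are tailored, via the moment-map construction defining $\Cone(\ngo)$, so that such a $v$ always exists; in dimension $5$ one can write it down by hand. An alternative reduction, useful when $D$ is generic, is to appeal to $\tgo(\ngo)_{srn}\cap\tgo(\ngo)_{gen}\subset\Cone(\ngo)$ together with the openness of $\tgo(\ngo)_{srn}$ in $\tgo(\ngo)_{\tr>0}$, which leaves only non-generic walls to verify by hand.

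The main obstacle will be the algebras whose cone $\Cone(\ngo)$ is cut out by moment-map inequalities coming from non-obvious $G_{\tgo(\ngo)}$-orbits of the bracket $\lb$ in $V=\Lambda^2\ngo^*\otimes\ngo$, since then each $L_j$ is a weighted combination $\sum a_i d_i$ rather than the positivity of a single eigenvalue. The Heisenberg-type algebras such as $\hg_3\oplus\RR^2$ and the standard filiform sit in this class, and $\hg_3\oplus\RR^2$ is additionally delicate because the centralizer of $\tgo(\ngo)$ is non-abelian, allowing extra conjugations that must be ruled out. For these I would invoke first the Nikolayevsky--Nikonorov necessary condition on the eigenvalues of $\ad Y$ restricted to the center of $\ngo$ to handle the easy walls, and then finish the remaining cases by explicit curvature computations on the low-dimensional walls of $\Cone(\ngo)$.
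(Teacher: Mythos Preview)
Your proposal contains a genuine error and also diverges from the paper's method in a way that leaves the hard cases unaddressed.

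The error: you invoke ``the openness of $\tgo(\ngo)_{srn}$ in $\tgo(\ngo)_{\tr>0}$'', but this is precisely what is \emph{not} known---the paper states explicitly in Section~\ref{RN-sec} that whether $\tgo(\ngo)_{srn}$ is open in $\tgo(\ngo)$ remains open. You cannot use it to reduce to the non-generic walls; fortunately the inclusion $\tgo(\ngo)_{srn}\cap\tgo(\ngo)_{gen}\subset\Cone(\ngo)$ already performs that reduction without any openness hypothesis, so this part is salvageable once the false claim is dropped.

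The more serious gap is your strategy on the non-generic walls. You propose, for a $D$ violating some $L_j$, to exhibit a vector $v$ with $\la\Ricci\,v,v\ra\geq 0$ for \emph{every} admissible inner product on $\sg_D$. It is not clear that such a $v$ can be produced uniformly across the whole family of metrics with $D^t=D$; the functionals $L_j$ encode moment-map data, not an obvious Ricci-positive direction, and you give no mechanism for extracting $v$ from $L_j$. The paper does something quite different: on each non-generic wall the centralizer $G_D$ strictly contains $G_{\tgo(\ngo)}$, so one recomputes the orbit closure $\overline{G_D\cdot\mu}$, its moment-map image, and the intersection with $\Diag(\ngo)$, and then applies Theorem~\ref{main}(iv) with a Weyl chamber of $G_D$ to obtain an \emph{exact} description of $\tgo(\ngo)_{srn}$ restricted to that wall. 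This is then checked to agree with $\Cone(\ngo)$ restricted to the wall. The Nikolayevsky--Nikonorov center condition handles only the trivial walls (center eigenvalue $\leq 0$); the substantive cases---e.g.\ the walls of $\ngo_6$---require the $G_D$ moment-map computation, aided by the $W_{ort}$-symmetry of Lemma~\ref{lem1} and the direct-sum reductions of Lemmas~\ref{lem2}--\ref{lem3}. Your plan does not identify this tool.
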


We also consider two important classes of nonabelian nilpotent Lie algebras: the Heisenberg Lie algebras and the
standard ﬁliform Lie algebras.

 \begin{proposition}[see Proposition 5.1]
 Let $\mathfrak{h}_{2n+1}$ be the Heisenberg Lie algebra with basis $\{e_1,e_2,\ldots, e_{2n+1}\}$ and Lie brackets:
 $$ \mu(e_1,e_2)=e_{2n+1}, \quad \mu(e_3,e_4)=e_{2n+1}, \quad \ldots, \quad \mu(e_{2n-1},e_{2n})=e_{2n+1}.$$
 For $ D:= \Diag(d_1,d_{n+1}-d_1,d_2,d_{n+1}-d_2,\ldots,d_n,d_{n+1}-d_n,d_{n+1})\in \tgo(\mathfrak{h}_{2n+1}),$ the $3^n$ equations which define the cone $\Cone(\mathfrak{h}_{2n+1})$ are given by:
 \begin{equation}
 (l+1)d_{n+1}\pm d_{i_1} \pm d_{i_2} \pm \ldots \pm d_{i_k} >0, \quad 0\leq k \leq n, \quad 0\leq l \leq k.
 \end{equation}
 \end{proposition}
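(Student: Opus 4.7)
The plan is to unwrap the definition of $\Cone(\mathfrak{h}_{2n+1})$ step by step. First I would identify $\tgo(\mathfrak{h}_{2n+1})$ and its centralizer $G_{\tgo(\mathfrak{h}_{2n+1})}$ in $\Gl(\ngo)$. Imposing the derivation identity on a diagonal operator $\Diag(a_1,\ldots,a_{2n+1})$ forces $a_{2i-1}+a_{2i}=a_{2n+1}$ for each $i$, which yields exactly the $(n+1)$-dimensional parametrization in the statement. The weights of $\tgo(\mathfrak{h}_{2n+1})$ on the basis $\{e_j\}$ are the linear functionals $d_i$, $d_{n+1}-d_i$, $d_{n+1}$; they are pairwise distinct, so the weight spaces are the lines $\RR e_j$ and $G_{\tgo(\mathfrak{h}_{2n+1})}$ equals the diagonal subgroup $(\RR^*)^{2n+1}\subset\Gl(\ngo)$.

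Next, I would describe the orbit and compute the moment map. An element $g=\Diag(g_1,\ldots,g_{2n+1})$ sends $e^{2i-1}\wedge e^{2i}\otimes e_{2n+1}$ to $(g_{2n+1}/g_{2i-1}g_{2i})\,e^{2i-1}\wedge e^{2i}\otimes e_{2n+1}$, so the orbit of $\lb$ consists of all $\mu=\sum_i c_i\,e^{2i-1}\wedge e^{2i}\otimes e_{2n+1}$ with $c_i\neq 0$, and the closure allows every $c_i\in\RR$. A direct computation of $\la A\cdot\mu,\mu\ra$ for diagonal $A$ gives
$$\mm(\mu)=\tfrac{1}{\|\mu\|^2}\Diag\bigl(-c_1^2,-c_1^2,\ldots,-c_n^2,-c_n^2,\textstyle\sum_i c_i^2\bigr),$$
which already lies in $\Diag(\ngo)$. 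Hence, after rescaling by $\RR_{>0}$ and setting $t_i=\lambda c_i^2/\|\mu\|^2$,
$$\RR_{>0}\,\mm\bigl(\overline{G_{\tgo(\mathfrak{h}_{2n+1})}\cdot\lb}\bigr)\cap\Diag(\ngo)=\bigl\{\Diag(-t_1,-t_1,\ldots,-t_n,-t_n,\textstyle\sum_i t_i):\,t_i\geq 0\bigr\}.$$

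Then I would encode membership in $\Cone(\mathfrak{h}_{2n+1})$. Writing $D=M+P$ with $M$ in the set above and $P\in\Diag(\ngo)_{>0}$ requires $t_i\geq 0$ and $p_j>0$ with $p_{2i-1}=d_i+t_i$, $p_{2i}=d_{n+1}-d_i+t_i$ and $p_{2n+1}=d_{n+1}-\sum_j t_j$. Such choices exist iff we can select $t_i\geq 0$ satisfying $t_i>\max(0,-d_i,d_i-d_{n+1})$ for each $i$ together with $\sum_i t_i<d_{n+1}$. Picking each $t_i$ arbitrarily close to its lower bound, this is equivalent to the single piecewise-linear inequality
$$\sum_{i=1}^n\max\bigl(0,\,-d_i,\,d_i-d_{n+1}\bigr)<d_{n+1}.$$

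Finally, this inequality is equivalent to $c_1+\cdots+c_n<d_{n+1}$ for every independent choice $c_i\in\{0,-d_i,d_i-d_{n+1}\}$, since the left-hand side is the supremum of these linear functions. Letting $S_1$ and $S_2$ be the disjoint subsets of $\{1,\ldots,n\}$ where $c_i=-d_i$ and $c_i=d_i-d_{n+1}$ respectively, each such inequality reads $(|S_2|+1)\,d_{n+1}+\sum_{i\in S_1}d_i-\sum_{i\in S_2}d_i>0$, which is the form $(l+1)d_{n+1}\pm d_{i_1}\pm\cdots\pm d_{i_k}>0$ with $k=|S_1\cup S_2|$ and $l=|S_2|$; the count $\sum_{k=0}^n\binom{n}{k}2^k=3^n$ matches the statement. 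The only delicate point is justifying that the centralizer collapses to the full diagonal subgroup via distinctness of weights; the remaining steps are a direct moment-map computation and a routine unfolding of a max-inequality into linear constraints.
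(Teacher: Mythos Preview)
Your proof is correct and follows the same overall route as the paper: both identify $\tgo(\mathfrak{h}_{2n+1})$ as multiplicity-free so that $G_{\tgo(\mathfrak{h}_{2n+1})}=\Diag(\mathfrak{h}_{2n+1})_{>0}$, compute the moment map on the diagonal orbit closure, and reduce membership in $\Cone(\mathfrak{h}_{2n+1})$ to the existence of parameters $a_i>0$ with $d_i+a_i>0$, $d_{n+1}-d_i+a_i>0$ and $d_{n+1}-\sum_j a_j>0$. The only difference is in how the auxiliary $a_i$ are eliminated: the paper does this iteratively, peeling off one variable at a time (``there exists $a_1>0$ with \ldots, hence there exists $a_2>0$ with \ldots''), whereas you observe directly that feasibility is equivalent to the single piecewise-linear condition $\sum_i\max(0,-d_i,d_i-d_{n+1})<d_{n+1}$ and then expand the maximum over its $3^n$ selectors. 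Your version is tidier and makes the count $3^n$ immediately transparent; the paper's iterative bookkeeping arrives at the same list but less directly. One small correction: by definition $G_{\tgo(\mathfrak{h}_{2n+1})}$ is the \emph{connected} component of the centralizer, so it is $(\RR_{>0})^{2n+1}$ rather than $(\RR^*)^{2n+1}$; this has no effect on the orbit closure or on the rest of your argument.
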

 This paves the way to prove Conjecture 1 for Heisenberg Lie algebras by using results of \cite{NklNkn}.
 \begin{theorem}[see Theorem 5.3]
  $\Cone(\mathfrak{h}_{2n+1})=\tgo(\mathfrak{h}_{2n+1})_{srn}, \text{ for all } n\in\NN.$
\end{theorem}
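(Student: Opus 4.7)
Only one inclusion of the theorem is substantive: $\Cone(\mathfrak{h}_{2n+1})\subset\tgo(\mathfrak{h}_{2n+1})_{srn}$ is immediate from the general containment $\Cone(\ngo)\subset\tgo(\ngo)_{srn}$ proved in \cite{RNder} and recalled in the introduction, so the theorem reduces to showing that every strongly Ricci negative $D\in\tgo(\mathfrak{h}_{2n+1})$ satisfies the $3^n$ half-space inequalities of Proposition 5.1. My plan is to extract these as necessary conditions on $D$ from the Nikolayevsky--Nikonorov analysis of Ricci negative solvable Lie algebras in \cite{NklNkn}.

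The basic Nikolayevsky--Nikonorov necessary condition (recalled in the introduction) applied directly to $\sg_D=\RR f\oplus\mathfrak{h}_{2n+1}$ only yields $d_{n+1}>0$, since the center of the nilradical is the one-dimensional line $\RR e_{2n+1}$. To recover the remaining inequalities, I would combine two ingredients. First, the automorphism group of $\mathfrak{h}_{2n+1}$ is large: in particular, the $(\ZZ/2)^n$-subgroup of $\Spe(2n,\RR)$ that swaps $e_{2i-1}$ with $e_{2i}$ on any chosen subset of Lagrangian pairs conjugates $D$ to a new diagonalizable derivation whose eigenvalues realize every sign-pattern $\pm d_{i_1}\pm\cdots\pm d_{i_k}$, and applying the necessary condition to each of the $2^n$ twisted extensions produces one of the inequalities $d_{n+1}\pm d_{i_1}\pm\cdots\pm d_{i_k}>0$. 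Second, replacing the element $f\in\sg_D$ in the criterion by $f+Z$ for suitable $Z$ in the non-central part of $\mathfrak{h}_{2n+1}$, or equivalently passing to a solvable subquotient of $\sg_D$ whose nilradical center effectively accumulates multiplicity $l+1$, should produce the inequalities with coefficient $(l+1)$ in front of $d_{n+1}$.

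Matching the resulting necessary conditions with the defining inequalities of $\Cone(\mathfrak{h}_{2n+1})$ from Proposition 5.1 then closes the argument. The main technical obstacle is to realize each admissible triple (subset of indices $\{i_1,\ldots,i_k\}$, sign pattern, value of $l$) by an honest application of a Nikolayevsky--Nikonorov necessary condition, without producing spurious or strictly stronger inequalities; a useful sanity check is that the $3^n$ half-spaces of Proposition 5.1 have no redundancy, so no valid application may shrink the cone beyond what Proposition 5.1 already predicts. Once each such triple is realized, the chain $\tgo(\mathfrak{h}_{2n+1})_{srn}\subset\{D:(l+1)d_{n+1}\pm d_{i_1}\pm\cdots\pm d_{i_k}>0\ \text{for all admissible choices}\}=\Cone(\mathfrak{h}_{2n+1})$ combined with the reverse inclusion gives the equality.
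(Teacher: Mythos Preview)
Your argument has a genuine gap. The general Nikolayevsky--Nikonorov necessary condition that you invoke says only that the eigenvalue of $\ad f$ on the center of the nilradical is positive; for $\mathfrak{h}_{2n+1}$ this is the single inequality $d_{n+1}>0$. Your step using the $(\ZZ/2)^n$-automorphisms does not help: swapping $e_{2i-1}$ and $e_{2i}$ (after fixing signs so it is actually an automorphism) permutes the eigenvalues $d_i$ and $d_{n+1}-d_i$ but leaves the center eigenvalue $d_{n+1}$ unchanged, so applying the necessary condition to the conjugated extension returns the \emph{same} inequality $d_{n+1}>0$, not $d_{n+1}\pm d_{i_1}\pm\cdots\pm d_{i_k}>0$. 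Likewise, replacing $f$ by $f+Z$ with $Z\in\mathfrak{h}_{2n+1}$ only adds a nilpotent operator to $\ad f$ and does not change any eigenvalue, so your step for producing the factor $(l+1)$ has no mechanism behind it. The subquotient idea is not spelled out and it is unclear how one would manufacture a nilradical whose center has eigenvalue $(l+1)d_{n+1}\pm d_{i_1}\pm\cdots\pm d_{i_k}$.

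The paper proceeds differently and much more directly: it does not use the general necessary condition at all, but rather \cite[Theorem~3]{NklNkn}, which is a \emph{complete} (necessary and sufficient) characterization of $\tgo(\mathfrak{h}_{2n+1})_{srn}$ specific to the Heisenberg case. In the paper's coordinates this reads
\[
d_{n+1}+\sum_{i:\,d_i<0}d_i+\sum_{j:\,d_j>d_{n+1}}(d_{n+1}-d_j)>0.
\]
Rewriting each of the $3^n$ inequalities of Proposition~5.1 as $d_{n+1}+\sum_i c_i>0$ with $c_i\in\{d_i,\,d_{n+1}-d_i,\,0\}$, one sees that the left-hand side above is exactly the minimum of these $3^n$ expressions (choose $d_i$ when $d_i<0$, choose $d_{n+1}-d_i$ when $d_i>d_{n+1}$, choose $0$ otherwise). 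Hence the single Nikolayevsky--Nikonorov inequality is equivalent to all $3^n$ inequalities simultaneously, and $\tgo(\mathfrak{h}_{2n+1})_{srn}=\Cone(\mathfrak{h}_{2n+1})$ follows. You should replace your twisting/subquotient plan by a direct appeal to this sharper theorem.
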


Finally, we study the standard filiform Lie algebra $L_n$.
 \begin{proposition}[see Proposition 6.1]
  Let $L_n$ be the filiform Lie algebra defined by the basis $\{e_1,e_2,\ldots, e_{n}\}$ and the Lie brackets:
  $$ \mu(e_1,e_2)=e_{3}, \quad \mu(e_1,e_3)=e_{4}, \quad \dots, \quad \mu(e_{1},e_{n-1})=e_{n}.$$ For $ D:= \Diag(d_1,d_2,d_1+d_2,2d_1+d_2,\ldots,(n-2)d_1+d_2) \in \tgo(L_n)$, the cone $\Cone(L_n)$ is defined by the following equations,
  $$(n-2)d_{1}+d_2 >0 \qquad \text{and} \qquad  \frac{(n-1)(n-2)}{2}d_1+(n-1)d_2 >0. $$
\end{proposition}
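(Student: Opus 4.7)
The plan is to reduce the description of $\Cone(L_n)$ to a convex geometry question in $\RR^n$ and then project onto the two-dimensional torus $\tgo(L_n)$, so that the two defining inequalities emerge as the two extreme rays of the image cone. For generic $(d_1,d_2)$ the eigenvalues $d_1,d_2,d_1+d_2,\ldots,(n-2)d_1+d_2$ are pairwise distinct, so the connected component of the centralizer is $G_{\tgo(L_n)}=(\RR_{>0})^n$. The bracket $\mu$ decomposes as a sum of weight vectors $v_{\alpha_i}=e_1^*\wedge e_i^*\otimes e_{i+1}$ with linearly independent weights $\alpha_i=\epsilon_{i+1}-\epsilon_1-\epsilon_i\in\Diag(L_n)^*$, $i=2,\ldots,n-1$. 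Since no two of these weights differ by a root $\epsilon_p-\epsilon_q$, $\mm$ stays diagonal along the whole $G_{\tgo(L_n)}$-orbit, and the convexity theorem for torus moment maps yields
\[
\mm\bigl(\overline{G_{\tgo(L_n)}\cdot\mu}\bigr)\;=\;\mathrm{conv}\{\alpha_2,\ldots,\alpha_{n-1}\}\;\subset\;\Diag(L_n).
\]

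Consequently, $\Cone(L_n)=\bigl(\mathrm{cone}(\alpha_2,\ldots,\alpha_{n-1})+\Diag(L_n)_{>0}\bigr)\cap\tgo(L_n)_{\tr>0}$. A Farkas-type duality then identifies $\Cone(L_n)$ with those $D\in\tgo(L_n)$ satisfying $\xi\cdot D>0$ for every nonzero $\xi$ in the dual cone $K^*:=\{\xi\in\RR_{\geq 0}^n:\xi_{i+1}\geq\xi_1+\xi_i\text{ for }i=2,\ldots,n-1\}$. Substituting the explicit form of $D$, the pairing factors as $\xi\cdot D=A(\xi)\,d_1+B(\xi)\,d_2$, where $A(\xi)=\xi_1+\sum_{k=3}^n(k-2)\xi_k$ and $B(\xi)=\sum_{k=2}^n\xi_k$. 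Thus the task reduces to identifying the image cone of $K^*$ in the $(A,B)$-plane.

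The core step is to show that this image cone is generated, for $n\geq 4$, by the two rays $(n-2,1)$ (attained at $\xi=e_n$) and $\bigl(\tfrac{(n-1)(n-2)}{2},n-1\bigr)$ (attained at $\xi=(0,1,\ldots,1)$). I would prove two bounding inequalities directly. For the lower bound $(n-2)B\geq A$, expand
\[
(n-2)B-A\;=\;-\xi_1+(n-2)\xi_2+\sum_{k=3}^{n-1}(n-k)\xi_k,
\]
and absorb $-\xi_1$ using $\xi_1\leq\xi_3-\xi_2$ (from $\xi_3\geq\xi_1+\xi_2$) to get the manifestly non-negative form $(n-1)\xi_2+(n-4)\xi_3+\sum_{k=4}^{n-1}(n-k)\xi_k$. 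For the more delicate upper bound $2A\geq (n-2)B$, the key identity is
\[
2A-(n-2)B\;=\;\sum_{i=2}^{n-1}(i-1)(n-i)(\xi_{i+1}-\xi_1-\xi_i)\;+\;\Bigl(2+\tfrac{n(n-1)(n-2)}{6}\Bigr)\xi_1,
\]
which is a sum of non-negative terms on $K^*$. Equality in each bound is attained precisely on the indicated ray, so these are the two extreme rays of the image cone, giving the defining inequalities $(n-2)d_1+d_2>0$ and $\tfrac{(n-1)(n-2)}{2}d_1+(n-1)d_2>0$. Finally, $\tr D$ is the positive combination $\tfrac{2}{n-2}\bigl[(n-2)d_1+d_2\bigr]+\bigl(1-\tfrac{2}{(n-1)(n-2)}\bigr)\bigl[\tfrac{(n-1)(n-2)}{2}d_1+(n-1)d_2\bigr]$, so the intersection with $\tgo(L_n)_{\tr>0}$ imposes no further constraint.

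The main obstacle is locating the correct coefficients in the upper-bound identity: the weights $(i-1)(n-i)$ are not apparent from inspection of the constraints and must be derived either by solving the underlying LP duality or by enumerating the extreme rays of $K^*$ (whose complexity grows with $n$) and tracking which of their images survive as extreme rays in the two-dimensional projection.
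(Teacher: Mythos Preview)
Your proof is correct (for $n\ge 4$) and takes a genuinely different route from the paper's. The paper works on the \emph{primal} side: it writes $D\in\Cone(L_n)$ as the existence of $a_2,\ldots,a_{n-1}>0$ and $E\in\Diag(L_n)_{>0}$ with $D=\sum_i a_iF_{1i}^{i+1}+E$, then eliminates the $a_i$'s one by one (from $a_{n-1}$ down to $a_2$) in a Fourier--Motzkin fashion, producing a chain of linear inequalities in $(d_1,d_2)$. At the end it checks by an elementary estimate that every intermediate inequality has slope between $\tfrac{n-2}{2}$ and $n-2$, so all are implied by the two extremal ones. You instead pass to the \emph{dual} via Farkas, reduce to the image of the polyhedral cone $K^*$ under the linear map $\xi\mapsto(A(\xi),B(\xi))$, and pin down its two extreme rays by explicit positivity certificates; your identity $2A-(n-2)B=\sum_{i=2}^{n-1}(i-1)(n-i)(\xi_{i+1}-\xi_1-\xi_i)+\bigl(2+\tfrac{n(n-1)(n-2)}{6}\bigr)\xi_1$ is exactly the LP dual of the paper's elimination.

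What each approach buys: the paper's elimination is mechanical and needs no clever guess, at the cost of tracking a long list of inequalities and a separate redundancy check. Your dual argument is more structural and delivers necessity and sufficiency at once, but hinges on locating the coefficients $(i-1)(n-i)$, which you rightly flag as the main obstacle. A minor point: your justification that $\mm$ stays diagonal along the torus orbit is really the ``nice basis'' observation (cf.\ \cite[Lemma~3.14]{RN}) rather than Atiyah--Guillemin--Sternberg convexity per se, since $\mm$ here is the full $\Gl$-moment map; the conclusion $\mm(\overline{G_{\tgo}\cdot\mu})=\CH\{F_{1i}^{i+1}\}$ is nonetheless correct. Finally, both arguments implicitly require $n\ge 4$: for $n=3$ one has $L_3=\hg_3$ and the cone is $\{2d_1+d_2>0,\ d_1+2d_2>0\}$, and indeed your bound $(n-2)B\ge A$ already fails at $\xi=(1,0,1)\in K^*$.
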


By using \cite[Theorem 4]{NklNkn}, we also prove Conjecture 1 in this case.
\begin{corollary}[see Corollary 6.2]
  $\Cone(L_n)=\tgo(L_n)_{srn}$, for all $n>3$.
\end{corollary}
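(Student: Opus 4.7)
The plan is to prove the two inclusions in $\Cone(L_n)=\tgo(L_n)_{srn}$ separately. One direction, $\Cone(L_n)\subset\tgo(L_n)_{srn}$, is a special case of the general fact $\Cone(\ngo)\subset\tgo(\ngo)_{srn}$ proved in \cite{RNder} and recalled in the introduction, so no further argument is needed for it. Hence the whole task reduces to establishing the reverse inclusion: given $D=\Diag(d_1,d_2,d_1+d_2,2d_1+d_2,\ldots,(n-2)d_1+d_2)\in\tgo(L_n)_{srn}$, I must show that $D$ satisfies both linear inequalities from Proposition~6.1.

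I would derive these two inequalities from \cite[Theorem~4]{NklNkn}, which provides necessary conditions on the eigenvalues of $\ad{f}|_{\ngo}$ for the one-dimensional solvable extension $\sg_D=\RR f\oplus\ngo$ to admit a metric of negative Ricci curvature. The first inequality $(n-2)d_1+d_2>0$ is immediate: the center of $L_n$ is $\RR e_n$ and $De_n=((n-2)d_1+d_2)e_n$, so the eigenvalue of $D$ on the center must be positive by the center condition already stated in the introduction. For the second inequality $\tfrac{(n-1)(n-2)}{2}d_1+(n-1)d_2>0$, I would exploit the fact that $\ag:=\spann\{e_2,\ldots,e_n\}$ is an abelian ideal of codimension one in $L_n$, and that $\tr(D|_{\ag})=\tfrac{(n-1)(n-2)}{2}d_1+(n-1)d_2$; the corresponding trace-positivity constraint is exactly what \cite[Theorem~4]{NklNkn} delivers once specialized to this setting.

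The main obstacle is precisely this specialization step: verifying that the abstract necessary condition of \cite[Theorem~4]{NklNkn}, formulated for an arbitrary nilpotent nilradical, indeed reduces on $L_n$ to the inequality $\tr(D|_{\ag})>0$, and that no further linear constraint on $(d_1,d_2)$ is produced by that theorem beyond the two already listed. Once both inequalities are in hand, Proposition~6.1 places $D\in\Cone(L_n)$, completing the reverse inclusion and hence the proof of the corollary.
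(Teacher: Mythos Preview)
Your proposal is correct and takes essentially the same approach as the paper: both invoke \cite[Theorem~4]{NklNkn} to identify $\tgo(L_n)_{srn}$ with the region cut out by the two inequalities of Proposition~6.1. The paper's argument is in fact a single sentence citing \cite{NklNkn}, so your self-identified ``obstacle'' is no more of a gap than what the paper itself leaves to that reference; your structural reading of the two conditions (positivity of the eigenvalue on $\zg(L_n)=\RR e_n$ for the first, and of $\tr(D|_{\ag})$ for the codimension-one abelian ideal $\ag=\spann\{e_2,\dots,e_n\}$ for the second) is correct.
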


\section{Preliminaries} \label{preli}

   \subsection{The representation $\lam$ and the moment map}\label{mm-sec}
   We consider the space of all skew-symmetric algebras of dimension $n$, which is parameterized by the vector space
$$
V:=\lam.
$$
There is a natural linear action of $\G$ on $V$ given by $g\cdot\mu:=g\mu(g^{-1}\cdot,g^{-1}\cdot)$, for all $g\in\G$, $\mu\in V$, whose derivative defines the following $\g$-representation on $V$,
$$
E\cdot\mu=E\mu(\cdot,\cdot)-\mu(E\cdot,\cdot)-\mu(\cdot,E\cdot),\qquad E\in\g,\quad\mu\in V.
$$
 Let $\tgo^n$ denote the set of all diagonal $n\times n$ matrices.  If $\{ e^1,...,e^n\}$ is the basis of $(\RR^n)^*$ dual to the canonical basis $\{ e_1,...,e_n\}$, then
$$
\{ \mu_{ijk}:=(e^i\wedge e^j)\otimes e_k : 1\leq i<j\leq n, \; 1\leq k\leq n\}
$$
is a basis of $V$ of weight vectors for the above representation.  Note that $\mu_{ijk}$ is actually the bilinear form on $\RR^n$ defined by
$\mu_{ijk}(e_i,e_j)=-\mu_{ijk}(e_j,e_i)=e_k$ and zero otherwise.  The corresponding weights of this basis are given by
$$
F_{ij}^k:=E_{kk}-E_{ii}-E_{jj}\in\tgo^n, \qquad i<j,
$$
where $E_{rs}$ denotes as usual the matrix whose only nonzero coefficient is $1$ at entry $rs$.  The structural constants $c(\mu)_{ij}^k$ of an
algebra $\mu\in V$ are then given by
$$
\mu(e_i,e_j)=\sum_{k}c(\mu)_{ij}^k\, e_k,
\qquad \mu=\sum_{i<j,\,k} c(\mu)_{ij}^k\, \mu_{ijk}.
$$
We endow all these vector spaces with their canonical inner products.

The moment map (or $\G$-gradient map) from real geometric invariant theory (see \cite{HnzSchStt,BhmLfn} for further information) for the above representation is the $\Or(n)$-equivariant map
$$
\mm:V\smallsetminus\{ 0\}\longrightarrow\sym(n),
$$
defined by
\begin{equation}\label{defmm}
\la \mm(\mu),E\ra=\tfrac{1}{|\mu|^2}\left\langle E\cdot\mu,\mu\right\rangle, \qquad \mu\in
V\smallsetminus\{ 0\}, \quad E\in\sym(n),
\end{equation}
or equivalently, for any $X,Y\in\RR^n$,
\begin{equation}\label{defmm2}
\la \mm(\mu)X,Y\ra = -\unm\sum\la\mu(X,e_i),e_j\ra\la\mu(Y,e_i),e_j\ra + \unc \sum\la\mu(e_i,e_j),X\ra\la\mu(e_i,e_j),Y\ra.
\end{equation}
We are using $\g=\sog(n)\oplus\sym(n)$ as a Cartan decomposition, where $\sog(n)$ and $\sym(n)$ denote the subspaces of skew-symmetric and symmetric matrices, respectively.  It is easy to check that $\mm$ is well defined on the projective space $\PP(V)$ and $\tr{\mm(\mu)}=-1$ for any $\mu\in V$.  In \cite{HnzSch, BllGhgHnz}, many nice and useful results on the convexity of the image of the moment map have been obtained, which were used in \cite{RN} to study Ricci negative solvmanifolds (see Section \ref{RN-sec}).

\subsection {Strongly Ricci negative derivations} \label{RN-sec}

Let $\ngo$ be a nilpotent Lie algebra, a maximal abelian subspace of diagonalizable (over $\RR$) derivations is called a {\it maximal torus} and denoted $\tgo(\ngo)$, it is known to be unique up to $\Aut(\ngo)$-conjugation. Each derivation $D$ of $\ngo$ defines a solvable Lie algebra
$$
\sg_D=\RR f\oplus\ngo,
$$
with Lie bracket defined as the semi-direct product such that $\ad{f}|_{\ngo}=D$.

The following strong condition for derivations was studied in \cite{RN}.

\begin{definition}\label{srnd-def}
A derivation $D$ of a nilpotent Lie algebra $\ngo$ with $\tr{D}>0$ is said to be {\it strongly Ricci negative} if the solvable Lie algebra $\sg_D$ admits an inner product of negative Ricci curvature such that $D^t=D$ and $f\perp\ngo$.  We denote by $\Der(\ngo)_{srn}$ the cone of all strongly Ricci negative derivations of $\ngo$.
\end{definition}
Since we do not know if $\Der(\ngo)_{srn}$ is open in $\Der(\ngo)$, we do not know whether the corresponding cone
\begin{equation}\label{tsrn}
\tgo(\ngo)_{srn}:=\Der(\ngo)_{srn}\cap\tgo(\ngo)
\end{equation}
is open in $\tgo(\ngo)$ either.  The cone $\tgo(\ngo)_{srn}$ was proved to be open in $\tgo(\ngo)$ and convex for Heisenberg and filiform Lie algebras endowed with the standard bases (see \cite{NklNkn,Nkl} for more details).

The next result of \cite{RN} characterizes strongly Ricci negative derivations in terms of the moment map $\mm$.  We fix a basis $\{ e_i\}$ of $\ngo$ such that $\tgo(\ngo)\subset\Diag(\ngo)$, the space of all operators of $\ngo$ whose matrix in terms of $\{ e_i\}$ is diagonal.  Note that the vector space $\ngo$ is identified with $\RR^n$ using the basis $\{ e_i\}$ and so the whole setting described in Section \ref{mm-sec} can be used.

In particular, the inner products of the definition of moment map are the canonicals, i.e. those making orthonormal the bases ${e_i\otimes e^j}$ and ${(e^i\wedge e^j)\otimes e_k}$.

Let $G_D$ denote the connected component of the centralizer subgroup of $D$ in $\Gl(\ngo)$ and let $\ggo_D$ be its Lie algebra.

\begin{theorem}\label{main}\cite[Corollary 3.4]{RN}
Let $\ngo$ be a nilpotent Lie algebra with Lie bracket $\lb$ and consider $D\in\tgo(\ngo)$ such that $\tr{D}>0$.  Then the following conditions are equivalent:
\begin{itemize}
\item[(i)] $D$ is strongly Ricci negative.
\item[ ]
\item[(ii)] $D\in\RR_{>0}\, \mm\left(G_D\cdot\lb\right)\cap\Diag(\ngo) + \Diag(\ngo)_{>0}$.
\item[ ]
\item[(iii)] $D\in\RR_{>0}\, \mm\left(\overline{G_D\cdot\lb}\right)\cap\Diag(\ngo) + \Diag(\ngo)_{>0}$.
\item[ ]
\item[(iv)] $D\in\RR_{>0}\, \mm\left(\overline{G_D\cdot\lb}\right)\cap\ag_+^D + \Diag(\ngo)_{>0}$, where $\ag_+^D\subset\Diag(\ngo)$ is any Weyl chamber of $G_D$.
\end{itemize}
\end{theorem}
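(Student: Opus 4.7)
The plan is to combine the explicit description of $\Cone(L_n)$ obtained in Proposition 6.1 with the characterization of strong Ricci negativity for the standard filiform Lie algebras established in \cite[Theorem 4]{NklNkn}.

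One inclusion is for free. The chain of inclusions $\tgo(\ngo)_{srn}\cap\tgo(\ngo)_{gen}\subset\Cone(\ngo)\subset\tgo(\ngo)_{srn}$ was already recorded in the introduction (from \cite{RNder}); in particular, for $\ngo=L_n$, every $D\in\Cone(L_n)$ is strongly Ricci negative, so $\Cone(L_n)\subset\tgo(L_n)_{srn}$ holds automatically. Hence the content of the corollary is the reverse inclusion $\tgo(L_n)_{srn}\subset\Cone(L_n)$.

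To establish this, I would invoke \cite[Theorem 4]{NklNkn}, which in the standard filiform setting produces an explicit set of necessary inequalities on the eigenvalues of a diagonalizable derivation $D=\Diag(d_1,d_2,d_1+d_2,\ldots,(n-2)d_1+d_2)$ for $\sg_D$ to admit a left-invariant metric of negative Ricci curvature. The strategy is then purely algebraic: write out this system of necessary conditions and verify that it reduces exactly to the two inequalities
\[
(n-2)d_1+d_2>0, \qquad \tfrac{(n-1)(n-2)}{2}\,d_1+(n-1)d_2>0
\]
that define $\Cone(L_n)$ by Proposition 6.1. The first of these is simply the positivity of the eigenvalue of $D$ on the one-dimensional center $Z(L_n)=\RR e_n$, i.e.\ the well-known Nikolayevsky--Nikonorov necessary condition. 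The second is the positivity of $\tr(D|_{\spann\{e_2,\ldots,e_n\}})$, which in the filiform case is forced by the rigid chain structure $\mu(e_1,e_i)=e_{i+1}$ and appears in \cite{NklNkn} as an additional obstruction coming from a judicious choice of test subspace. Since both inequalities are necessary, $\tgo(L_n)_{srn}$ is contained in the open convex region they cut out, which is exactly $\Cone(L_n)$.

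The main obstacle is the bookkeeping required to match the inequalities coming out of \cite[Theorem 4]{NklNkn} with those of Proposition 6.1: in particular, one must check that the two inequalities of Proposition 6.1 really are the full list (no redundant conditions are being dropped, and no extra necessary conditions are being missed), and that the normalizations used for the eigenvalues of $D$ in the two references agree up to the parametrization $(d_1,d_2)$ employed here. Once this matching is done, the two inclusions combine to give $\tgo(L_n)_{srn}=\Cone(L_n)$ for every $n>3$.
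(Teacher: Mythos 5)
There is a fundamental mismatch here: your proposal does not address the statement in question at all. The statement to be proved is Theorem~\ref{main} (quoted from \cite[Corollary 3.4]{RN}), namely the equivalence, for an arbitrary nilpotent Lie algebra $\ngo$ and any $D\in\tgo(\ngo)$ with $\tr{D}>0$, of (i) strong Ricci negativity of $D$ with the three moment-map membership conditions (ii), (iii), (iv). A proof of this would have to relate the Ricci curvature of metrics on the solvable extension $\sg_D=\RR f\oplus\ngo$ to the map $\mm$, handle the passage from the orbit $G_D\cdot\lb$ to its closure $\overline{G_D\cdot\lb}$, and reduce from $\Diag(\ngo)$ to a single Weyl chamber $\ag_+^D$ (this last step is where the convexity results of \cite{HnzSch} enter). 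Your text contains none of this: it is instead an outline of a proof of Corollary 6.2, i.e.\ $\Cone(L_n)=\tgo(L_n)_{srn}$ for the standard filiform algebra $L_n$, via Proposition 6.1 and \cite[Theorem 4]{NklNkn}. That is a different, much more special statement, and no part of your argument can be salvaged as a proof of the equivalences (i)--(iv).

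Moreover, even viewed on its own terms, your argument is logically downstream of the very theorem you were asked to prove: the inclusion chain $\tgo(\ngo)_{srn}\cap\tgo(\ngo)_{gen}\subset\Cone(\ngo)\subset\tgo(\ngo)_{srn}$ that you invoke as ``for free'' is obtained in \cite{RNder} precisely by applying Theorem~\ref{main} (the cone $\Cone(\ngo)$ is built from the smallest of the open cones appearing in condition (iii), using $G_{\tgo(\ngo)}\subset G_D$). Using it here would be circular. Note also that in this paper Theorem~\ref{main} is not reproved but cited from \cite{RN}; if a proof is required, it must follow the lines of that reference (expressing the Ricci operator of $(\sg_D,\ip)$ in terms of $\mm$ of the rescaled bracket and exploiting $G_D$-invariance and convexity of the moment map image), not the filiform-specific comparison with \cite{NklNkn} that you propose.
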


Note that all the cones in the above theorem depend on $D$ and are open in $\Diag(\ngo)$ since the space $\Diag(\ngo)_{>0}$ of positive diagonal matrices is so.  On the other hand, the cone in part (iv) is in addition convex by \cite{HnzSch}.

\subsection {An open and convex cone of Ricci negative derivations}\label{opencone}

We keep the basis $\{ e_1,\dots,e_n\}$ of $\ngo$ such that $\tgo(\ngo)\subset\Diag(\ngo)$ fixed.  Let $\alpha_1,\dots,\alpha_r\in\tgo(\ngo)^*$ be the weights for $\tgo(\ngo)$ and let $\ngo=\ngo_1\oplus\dots\oplus\ngo_r$ be the decomposition of $\ngo$ in weight subspaces, that is,
$$
DX=\alpha_i(D)X, \qquad \text{for all } X\in\ngo_i, \quad D\in\tgo(\ngo).
$$
We call a $D\in\tgo(\ngo)$ {\it generic} when $\alpha_i(D)\ne\alpha_j(D)$ for any $i\ne j$.  Let $\tgo(\ngo)_{gen}$ denote the subset of all generic derivations, which is open and dense in $\tgo(\ngo)$.

If $G_{\tgo(\ngo)}$ denotes the connected component of the centralizer of $\tgo(\ngo)$ in $\Gl(\ngo)$, then $G_{\tgo(\ngo)}\subset G_D$ for any $D\in\tgo(\ngo)$ and equality holds if and only if $D\in\tgo(\ngo)_{gen}$.  In the {\it multiplicity-free} case, i.e.\ $\dim{\ngo_i}=1$ for all $i$, $G_{\tgo(\ngo)}$ is given by the torus $\Diag(\ngo)_{>0}$ with Lie algebra $\Diag(\ngo)$ and $\tgo(\ngo)$ is the only Weyl chamber.  Note that the space $\tgo(\ngo)$ is multiplicity-free if and only if there is at least one $D\in\tgo(\ngo)$ with pairwise different eigenvalues.

We consider the smallest of the open cones appearing in Theorem \ref{main}, (iii) to introduce the following cone defined in \cite{RNder}:
\begin{equation}\label{def cono}
\Cone(\ngo):=\left(\RR_{>0}\, \mm\left(\overline{G_{\tgo(\ngo)}\cdot\lb}\right)\cap\Diag(\ngo) + \Diag(\ngo)_{>0}\right)\cap\tgo(\ngo)_{\tr>0},
\end{equation}
where $\tgo(\ngo)_{\tr>0}:=\left\{ D\in\tgo(\ngo):\tr{D}>0\right\}$.

Let $O(\ngo)$ denote the orthogonal group relative to the inner product making $\{e_i\}$ an orthonormal basis. In order to have a group acting on $\Cone(\ngo)$, which may be helpful for computing it, we consider the group
\begin{equation}\label{ortweyl}
    W_{ort}(\ngo):=N_{\Aut(\ngo)\cap O(\ngo)}(\tgo(\ngo))/C_{\Aut(\ngo)\cap O(\ngo)}(\tgo(\ngo)),
   \end{equation}
where $N$ and $C$ denote normalizer and centralizer, respectively. We call it the orthogonal Weyl group.

We define the convex polytope $\Cone_t(\ngo):=\{D \in \Cone(\ngo) : \tr(D)=t\}$ which is also $W_{ort}$-invariant for all $t>0$.

\begin{proposition}\cite{RNder}
$\Cone(\ngo)$ is an open and convex cone in $\tgo(\ngo)$ such that
\begin{equation}
\tgo(\ngo)_{srn}\cap\tgo(\ngo)_{gen}\subset\Cone(\ngo)\subset\tgo(\ngo)_{srn}.
\end{equation}
\end{proposition}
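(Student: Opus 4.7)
The plan is to derive each assertion from Theorem \ref{main}, exploiting the basic fact that $G_{\tgo(\ngo)}\subseteq G_D$ for every $D\in\tgo(\ngo)$, with equality exactly when $D\in\tgo(\ngo)_{gen}$. The two containments follow immediately: if $D\in\Cone(\ngo)$ then $\mm(\overline{G_{\tgo(\ngo)}\cdot\lb})\subseteq\mm(\overline{G_D\cdot\lb})$, so condition (iii) of Theorem \ref{main} is satisfied and $D\in\tgo(\ngo)_{srn}$; conversely, a generic $D\in\tgo(\ngo)_{srn}$ has $G_D=G_{\tgo(\ngo)}$, so (iii) places it directly in $\Cone(\ngo)$. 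Openness is routine: $\Diag(\ngo)_{>0}$ is open in $\Diag(\ngo)$, hence $X+\Diag(\ngo)_{>0}$ is open in $\Diag(\ngo)$ for every subset $X$, and intersecting with the relatively open $\tgo(\ngo)_{\tr>0}$ yields an open subset of $\tgo(\ngo)$. The cone property is clear from the $\RR_{>0}$-invariance of each of the three factors in the definition.

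Convexity is the main obstacle, since $\RR_{>0}\,\mm(\overline{G_{\tgo(\ngo)}\cdot\lb})\cap\Diag(\ngo)$ is only a union of Weyl-translates of a convex polytope and is not convex in $\Diag(\ngo)$ in general. The key structural input to exploit is that $\tgo(\ngo)$ is central in the Lie algebra $\ggo_{\tgo(\ngo)}$ by definition of centralizer, hence fixed pointwise by the Weyl group $W$ of $G_{\tgo(\ngo)}$; consequently $\tgo(\ngo)\subseteq\overline{\ag_+}$ for every Weyl chamber $\ag_+\subseteq\Diag(\ngo)$ of $G_{\tgo(\ngo)}$. Fixing one such chamber, I plan to identify
$$
\Cone(\ngo)=\Bigl(\RR_{>0}\,\mm\bigl(\overline{G_{\tgo(\ngo)}\cdot\lb}\bigr)\cap\overline{\ag_+}\,+\,\Diag(\ngo)_{>0}\Bigr)\cap\tgo(\ngo)_{\tr>0}.
$$
The inclusion $\supseteq$ is obvious. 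For $\subseteq$, given a decomposition $D=a+b\in\Cone(\ngo)$ with $a\in\RR_{>0}\,\mm(\overline{G_{\tgo(\ngo)}\cdot\lb})\cap\Diag(\ngo)$ and $b\in\Diag(\ngo)_{>0}$, choose $w\in W$ with $w\cdot a\in\overline{\ag_+}$. Using that $W\subseteq G_{\tgo(\ngo)}\cap O(n)$, realized as block-permutation matrices in the weight decomposition, preserves the orbit $G_{\tgo(\ngo)}\cdot\lb$ (hence its closure and moment image) and also preserves $\Diag(\ngo)_{>0}$, and that $D\in\tgo(\ngo)$ is $W$-fixed, I obtain $D=w\cdot D=w\cdot a+w\cdot b$ lying in the displayed set.

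Convexity of the displayed set then follows from the same convexity theorem \cite{HnzSch} that underlies the convexity of the cone in Theorem \ref{main}(iv): $\mm(\overline{G_{\tgo(\ngo)}\cdot\lb})\cap\overline{\ag_+}$ is a compact convex polytope, its $\RR_{>0}$-hull is a convex cone, the sum with the convex cone $\Diag(\ngo)_{>0}$ remains convex, and intersecting with the convex open half-space $\tgo(\ngo)_{\tr>0}$ preserves convexity. The delicate technical point to verify carefully is the $W$-equivariance of the moment map together with the $W$-invariance of the orbit closure $\overline{G_{\tgo(\ngo)}\cdot\lb}$, which is what legitimizes the Weyl-transportation of $a$ into $\overline{\ag_+}$.
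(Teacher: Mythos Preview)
The paper does not actually prove this proposition: it is stated with the citation \cite{RNder} and no proof is given anywhere in the text. There is therefore nothing in the present paper to compare your argument against.

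That said, your argument is correct and is essentially the natural one. The two inclusions and openness are exactly as you describe, and your treatment of convexity is the right idea: the key observation is that $\tgo(\ngo)$ lies in the center of $\ggo_{\tgo(\ngo)}$ (each $D\in\tgo(\ngo)$ acts as a scalar on every weight space $\ngo_i$), hence is fixed pointwise by the Weyl group $W$ of $G_{\tgo(\ngo)}$. Your Weyl-transportation step is legitimate because representatives of $W$ can be taken in $K=G_{\tgo(\ngo)}\cap O(n)$, so they preserve the orbit $G_{\tgo(\ngo)}\cdot\lb$, act on $\Diag(\ngo)$ by block permutations (hence preserve $\Diag(\ngo)_{>0}$), and the $O(n)$-equivariance of $\mm$ then gives $w\cdot a\in\RR_{>0}\,\mm(\overline{G_{\tgo(\ngo)}\cdot\lb})$. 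After rewriting $\Cone(\ngo)$ with $\overline{\ag_+}$ in place of $\Diag(\ngo)$, convexity follows from \cite{HnzSch} together with the elementary fact that $\RR_{>0}\,C$ is convex for any convex $C$ and that Minkowski sums and intersections of convex sets are convex. The one point you flagged as ``delicate'' (the $W$-invariance of the orbit closure) is in fact immediate from $W\subset G_{\tgo(\ngo)}$.
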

This motivates the next conjecture,
\begin{description}
\item[Conjecture 1 \cite{RNder}] $\Cone(\ngo)=\tgo(\ngo)_{srn}$.
\end{description}

In the next sections, we will see that this holds for nilpotent Lie algebras of dimension $5$, as well as for Heisenberg and ﬁliform
Lie algebras.

\section{The cone $\Cone(\ngo)$ for nilpotent Lie Algebras of dim $5$}\label{conosdim5}
    In order to prove that Conjecture 1 is valid in dimension 5 (see Section \ref{opencone}), we study in this section the cones $\Cone(\ngo)$ for nilpotent Lie algebras of dimension 5, which are defined in Table \ref{clasifi}.
  \begin{table}[h]
  \renewcommand{\arraystretch}{1.5}
  \begin{center}
  \begin{tabular}{||c|c||}
      \hline\hline
      $\ngo$ & Lie bracket of $\ngo$  \\
      \hline\hline
    % after \\: \hline or \cline{col1-col2} \cline{col3-col4} ...
    $\ngo_1$ &  $ \mu(e_1,e_2)=e_3, \qquad \mu(e_1,e_3)=e_4, \qquad \mu(e_1,e_4)=e_5 $ \\
    \hline
    $\ngo_2$ &  $ \mu(e_1,e_2)=e_3, \qquad \mu(e_1,e_3)=e_4, \qquad \mu(e_1,e_4)=e_5, \qquad \mu(e_2,e_3)=e_5$ \\
    \hline
    $\ngo_3$ &  $ \mu(e_1,e_2)=e_4, \qquad \mu(e_2,e_3)=e_5, \qquad \mu(e_1,e_4)=e_5 $\\
    \hline
    $\ngo_4$ & $ \mu(e_1,e_2)=e_5, \qquad \mu(e_3,e_4)=e_5$  \\
    \hline
    $\ngo_5$ &  $ \mu(e_1,e_2)=e_3, \qquad \mu(e_1,e_3)=e_4, \qquad \mu(e_2,e_3)=e_5 $ \\
    \hline
    $\ngo_6$ & $ \mu(e_1,e_2)=e_4, \qquad \mu(e_1,e_3)=e_5 $ \\
    \hline
    $\ngo_7$ &  $ \mu(e_1,e_2)=e_3$ \\
    \hline
    $\ngo_8$ & $ \mu(e_1,e_2)=e_3, \qquad \mu(e_1,e_3)=e_4 $ \\
    \hline\hline
  \end{tabular}
  \end{center}
  \caption{Nilpotent Lie Algebras of dimension 5}
  \label{clasifi}
  \end{table}

  \subsection{Case $\ngo_1$.} \label{n1}

Let $D$ be a derivation of $\ngo_1$ in the maximal torus $\tgo(\ngo_1)$, this implies that
$$D=\Diag(d_1,d_2,d_1+d_2,2d_1+d_2,3d_1+d_2), \text{ with } d_1,d_2\in\RR .$$
 $D$ is generic if and only if $d_1\neq d_2, \ d_1 \neq -d_2, \ d_2 \neq -2d_1, \text{ and } d_1,d_2 \neq 0. $
In the generic case, as $\tgo(\ngo_1)$ is multiplicity-free, $G_D=\Diag(\ngo_1)_{>0}$ and $ \overline{\Diag(\ngo_1)_{>0}\cdot \mu}$ is given by the linear subspace of V of nilpotent Lie brackets $\lambda= \lambda(\alpha, \beta,\gamma)$ deﬁned by
$$ \lambda(e_1,e_2)= \alpha e_3, \qquad \lambda(e_1,e_3)=\beta e_4, \qquad \lambda(e_1,e_4)=\gamma e_5, $$ with $\alpha,\beta,\gamma \geq 0$. The moment map of $\lambda$ is given by
 \begin{align*}
      \mm(\lambda)&=\tfrac{1}{\alpha^2 + \beta^2 + \gamma^2} \Diag(-\alp -\bet -\gam,- \alp,-\bet + \alp,-\gam + \bet,\gam) \\
       & =\tfrac{1}{\alp + \bet + \gam}(\alp F_{12}^3 + \bet F_{13}^4 + \gam F_{14}^5).
  \end{align*}

This means that $\mm(\lambda)=\mm(\overline{\Diag(\ngo_1)_{>0}\cdot \mu})=\CH (F_{12}^3, F_{13}^4, F_{14}^5).$

It follows from \eqref{def cono} that $D \in \Cone(\ngo_1) \subset \tgo(\ngo_1)$ if and only if $D=aF_{12}^3+bF_{13}^4+cF_{14}^5 + E$ and $\tr(D)>0$, where $a,b,c > 0$ and $E$ is a positive definite diagonal matrix. This can be expressed as a system of inequalities as follows:
 \begin{equation*}
   \left\{
 \begin{aligned}
  d_1+a+b+c>0,   \qquad  (1.1) \\
  d_2+a>0, \qquad  (1.2) \\
  d_1+d_2-a+b>0,  \qquad (1.3)\\
  \end{aligned}
  \right.
  \qquad \qquad
  \left\{
  \begin{aligned}
  2d_1+d_2-b+c>0,  \qquad (1.4)\\
  3d_1+d_2-c>0.  \qquad (1.5)
 \end{aligned}
      \right.
\end{equation*}
Condition $(1.5)$ implies that $3d_1+d_2 >0$ and by adding the last three inequalities we obtain $2d_1+d_2>0$.

From $(1.5)$, $(1.4)$, $(1.1)$, we have the existence of $c>0$ such that
$$ 3d_1+d_2>c>-2d_1-d_2+b \ ,  -d_1-a-b,$$
this holds if and only if, by $(1.3)$, there exists $b>0$ such that
$$5d_1+2d_2>b> -d_1-d_2+a \ , \ -4d_1-d_2-a, $$
and, consequently, there exists $a>0$ such that
$$6d_1+3d_2>a>-9d_1-3d_2 \ , \ -d_2. $$
And so, we also get $15d_1+6d_2>0$ and $6d_1+4d_2>0$.

From this, we can see that the equations which define $\Cone(\ngo_1)$ are (see Figure~\ref{cone1}):
$$ 3d_1+d_2>0, \qquad \qquad 3d_1+2d_2>0. $$

\subsection{Case $\ngo_2$}\label{n2}

Let $D=\Diag(d_1,2d_1,3d_1,4d_1,5d_1), \ d_1\in\RR$ be a derivation of $\ngo_2$ in the maximal torus $\tgo(\ngo_2)$.
If $d_1\neq 0$, then $D$ is generic and $G_D=\Diag(\ngo_2)_{>0}$, therefore $\overline{G_D\cdot\mu}=\lambda=\RR_{\geq0}\mu$

It can be seen that $\mm(\lambda)=\mm(\overline{G_D\cdot \mu})=\CH(F_{12}^3, F_{13}^4, F_{14}^5, F_{23}^5)$ and by definition \eqref{def cono} we have that $D \in \Cone(\ngo_2) \subset \tgo(\ngo_2)$ if and only if $D=aF_{12}^3+bF_{13}^4+cF_{14}^5+dF_{23}^5 + E$ where $a,b,c,d > 0$ and $E \in \Diag(\ngo_2)_{>0}$.
This means that $D \in \Cone(\ngo_2)$ if and only if $d_1>0,$ see Figure~\ref{cone2} below.

\begin{paracol}{2}
\begin{figure}[h!]
\begin{tikzpicture}[scale=0.63]
  %[domain=-4:3]
\draw[very thin,color=gray,step=1cm] (-3.2,-3.2) grid (5.5,5.5);
\foreach \x in {-1,0,1,2,3,4,5}
   \draw (\x cm,1pt) -- (\x cm,-1pt) node[anchor=north] {$\x$};
\foreach \y in {0,1,2,3,4,5}
   \draw (1pt,\y cm) -- (-1pt,\y cm) node[anchor=east] {$\y$};

%\draw[very thick, color=blue] (0,0) .. controls (3,-1) and (6,0) .. (7,3);

\draw[->] (-3,0) -- (6,0) node[right] {$d_1$};
\draw[->] (0,-2) -- (0,5) node[above] {$d_2$};

\draw[very thick, red]  (-2,3.5) -- (12/7,-3);

\draw[dashed, very thick, blue]  (0,0) -- (-1.5,4.5);
\draw[dashed, very thick, blue]  (0,0) -- (2,-3);

%\draw[dashed,thick,black] (0,0) -- (2,-2);
%\draw[dashed,thick,black] (0,0) -- (-2,4);
%\draw[dashed,thick,black] (0,0) -- (3,3);

\draw[orange, fill, opacity=0.2]  (-4/3,4) .. controls (4,4) .. (2,-3) -- (0,0) -- (-4/3,4);

\draw (3.8,1.5) node[color=orange, thick] {$\Cone(\ngo_1)$};%_\ngo_1}$};
%\draw (2,-1) node[color=blue] {$y=-\frac{1}{4}x$};
\draw (-2.3,1.5) node[color=red,very thick] {$\tr(D)>0$};
\draw (3,-2.5) node[color=blue,very thick] {$d_2=-\frac{3}{2}d_1$};
\draw (-1.8,4.7) node[color=blue,very thick] {$d_2=-3d_1$};
%\draw (3.5,3.5) node[color=black,very thick] {derivaciones no genéricas};

%\draw[color=blue, fill=blue] (3,3) circle (1.5pt) node[below] {$\gamma$};
%\draw[color=blue, fill=blue] (3,7) circle (1.5pt) node[below, teal] {soliton};

\end{tikzpicture}\caption{$\Cone(\ngo_1)$}
\label{cone1}
\end{figure}
\switchcolumn
\vspace{2.3cm}
\begin{figure}[h!]
\begin{tikzpicture}[scale=0.8]
  %[domain=-4:3]
\draw[very thin,color=gray,step=1cm] (-1.1,-1.1) grid (6.1,1.1);
\foreach \x in {-1,0,1,2,3,4,5}
   \draw (\x cm,1pt) -- (\x cm,-1pt) node[anchor=north] {$\x$};
\foreach \y in {}
   \draw (1pt,\y cm) -- (-1pt,\y cm) node[anchor=east] {$\y$};

\draw[->] (-0.8,0) -- (6,0) node[right] {$d_1$};

\draw[color=red, very thick] (0,0) -- (6.1,0);
\draw (3.8,1.5) node[color=red, thick] {$\Cone(\ngo_2)$};
\draw (0,0) node[color=red,very thick] {\textbf{$\left( \right.$}};

\end{tikzpicture}
\caption{$\Cone(\ngo_2)$}
\label{cone2}
\end{figure}
\end{paracol}

\subsection{Cases $\ngo_3$ and $\ngo_5$}\label{n35} For the cases $\ngo_3$ and $\ngo_5$ we are going to see the equations of the cones and the figures without detailed computes, which are very similar to the case $\ngo_1$.

Here $d_1,d_2\in\RR$ are the variables of the respective diagonal derivation $D$, the equations of $\Cone(\ngo_3)$ are $ 4d_1+d_2>0$ and $d_1+d_2>0,$ see Figure~\ref{cone3}. $\Cone(\ngo_5)$ is defined by $ d_1+2d_2>0$ and $2d_1+d_2>0$ as we can see in Figure~\ref{cone5}.

\subsection{Case $\ngo_4$} The Lie algebra $\ngo_4$ is the Heisenberg Lie algebra of dimension 5, and we will discuss it in Section~\ref{Heis}.

\newpage
    \begin{paracol}{2}
    \begin{figure}[h!]
\begin{tikzpicture}[scale=0.76]
  %[domain=-4:3]
\draw[very thin,color=gray,step=1cm] (-3.2,-3.2) grid (4.3,4.3);
\foreach \x in {-1,0,1,2,3}
   \draw (\x cm,1pt) -- (\x cm,-1pt) node[anchor=north] {$\x$};
\foreach \y in {0,1,2,3}
   \draw (1pt,\y cm) -- (-1pt,\y cm) node[anchor=east] {$\y$};

\draw[->] (-3,0) -- (4,0) node[right] {$d_1$};
\draw[->] (0,-2) -- (0,4) node[above] {$d_2$};

\draw[very thick, red]  (-2,4) -- (3/2,-3);

\draw[dashed, very thick, blue]  (0,0) -- (-1,4);
\draw[dashed, very thick, blue]  (0,0) -- (3,-3);

\draw[orange, fill, opacity=0.2]  (-1,4) .. controls (4,4) .. (3,-3) -- (0,0) -- (-1,4);

\draw (3.8,1.5) node[color=orange, thick] {$\Cone(\ngo_3)$};
\draw (-2,1.5) node[color=red,very thick] {$tr(D)>0$};
\draw (3.2,-3.2) node[color=blue,very thick] {$d_2=-d_1$};
\draw (0.1,3.7) node[color=blue,very thick] {$d_2=-4d_1$};
\end{tikzpicture}
\caption{$\Cone(\ngo_3)$}
\label{cone3}
\end{figure}
    \switchcolumn
\begin{figure}[h!]
\begin{tikzpicture}[scale=0.8]
  %[domain=-4:3]
\draw[very thin,color=gray,step=1cm] (-3.2,-3.2) grid (4.3,4.3);
\foreach \x in {-1,0,1,2,3}
   \draw (\x cm,1pt) -- (\x cm,-1pt) node[anchor=north] {$\x$};
\foreach \y in {0,1,2,3}
   \draw (1pt,\y cm) -- (-1pt,\y cm) node[anchor=east] {$\y$};

\draw[->] (-3,0) -- (4,0) node[right] {$d_1$};
\draw[->] (0,-2) -- (0,4) node[above] {$d_2$};

\draw[very thick, red]  (-3,3) -- (3,-3);

\draw[dashed, very thick, blue]  (0,0) -- (-2,4);
\draw[dashed, very thick, blue]  (0,0) -- (4,-2);

\draw[orange, fill, opacity=0.2]  (-2,4) .. controls (4,4) .. (4,-2) -- (0,0) -- (-2,4);

\draw (3.2,1.5) node[color=orange, thick] {$\Cone(\ngo_5)$};
\draw (-1.9,0.5) node[color=red,very thick] {$tr(D)>0$};
\draw (3.2,-0.7) node[color=blue,very thick] {$d_2=-\frac{1}{2}d_1$};
\draw (-0.5,3.5) node[color=blue,very thick] {$d_2=-2d_1$};
\end{tikzpicture}
\caption{$\Cone(\ngo_5)$}
\label{cone5}
\end{figure}
\end{paracol}

\subsection{Case $\ngo_6$}\label{n6}

A diagonal matrix $D$ is a derivation of $\ngo_6$ in $\tgo(\ngo_6)$ if and only if $D=\Diag(d_1,d_2,d_3,d_1+d_2,d_1+d_3), \text{ where } d_1,d_2,d_3\in \RR.$ In the generic case, i.e. when $d_1\neq d_2, \ d_1\neq d_3, \ d_2 \neq d_3, \ d_1+d_2\neq d_3, \ d_1+d_3\neq d_2 \text{ and } d_1,d_2,d_3 \neq 0,$ we obtain that $G_D=G_{\tgo(\ngo_6)}=\Diag(\ngo_6)_{>0}$, and $ \overline{ G_D \cdot \mu}$ is given by
$$\lambda(e_1,e_2)= \alpha e_4, \qquad \lambda(e_1,e_3)=\beta e_5.$$
where $\alpha, \beta \geq 0$.
As the basis is nice, by \cite[Lemma 3.14]{RN}, $\mm(\lambda)=\mm(\overline{G_D\cdot \mu})=\CH (F_{12}^4, F_{13}^5)$.

From \eqref{def cono} we say that $D \in \Cone(\ngo_6) \subset \tgo(\ngo_6)$ if and only if $D=aF_{12}^4+bF_{13}^5 + E$ and $\tr(D)>0$, where $a,b > 0$ and $E \in \Diag(\ngo_6)_{>0}$, this implies:
\begin{equation*}
\left\{
\begin{aligned}
  d_1+a+b & >0, \qquad  (6.1)\\
  d_2+a & >0, \qquad  (6.2)\\
  d_3+b & >0, \qquad  (6.3)\\
  \end{aligned}
  \right.
  \qquad \qquad
  \left\{
  \begin{aligned}
  d_1+d_2-a & >0,  \qquad (6.4)\\
  d_1+d_3-b & >0,  \qquad (6.5)
\end{aligned}
\right.
\end{equation*}

Note that, from conditions $(6.5)$, $(6.3)$, $(6.1)$, there exists $b>0$ such that
$$ d_1+d_3>b>-d_3 \ , \ -d_1-a,$$
and this holds if and only if $d_1+2d_3>0$ and by $(6.4)$ there exists $a>0$ such that
$$d_1+d_2>a>-2d_1-d_3 \ , \ -d_2, $$
this means $ 3d_1+d_2+d_3>0$ and $d_1+2d_2>0 $.
Then, the system above is equivalent to the next one.
$$ \left\{
\begin{aligned}
  d_1+d_2 & >0, \\
  d_1+d_3 & >0, \\
  d_1+2d_3 & >0, \\
  \end{aligned}
  \right.
  \qquad \qquad
  \left\{
  \begin{aligned}
  d_1+2d_2& >0, \\
  3d_1+d_2+d_3& >0.
\end{aligned}
\right. $$
As the cone $\Cone(\ngo_6)$ is invariant up to scaling, we can considerer $\tr(D)=1$ such that the equations of $\Cone_1(\ngo_6)$ are:
$$ \left\{
\begin{aligned}
 1-2d_1-d_2-2d_3 & >0, \\
 1-2d_1-2d_2-d_3 & >0,\\
 1-2d_1-2d_2 & >0, \\
   \end{aligned}
  \right.
  \qquad \qquad
  \left\{
  \begin{aligned}
 1-2d_1-2d_3& >0, \\
  1-d_2-d_3& >0.
\end{aligned}
\right. $$
We see that $\Cone_1(\ngo_6)$ is a five-sided polygon which its vertices are:
 $$\left(0,\tfrac{1}{2},0\right) \quad \left(0,0,\tfrac{1}{2}\right) \quad \left(1,-\tfrac{1}{2},-\tfrac{1}{2}\right) \quad \left(-\tfrac{1}{3},\tfrac{2}{3},\tfrac{1}{3}\right) \quad \left(-\tfrac{1}{3},\tfrac{1}{3},\tfrac{2}{3}\right) $$

We can see a figure of this if we consider the subspace $3d_1+2d_2+2d_3 =0$, an orthonormal basis of it and the coordinates of the vertices in this basis, see Figure~\ref{cone6}.

\begin{figure}[h]
\begin{tikzpicture}[scale=1.8]
  %[domain=-4:3]
\draw[very thin,color=gray,step=0.5cm] (-1.2,-1.2) grid (1.6,1.2);
\foreach \x in {-1,0,1}
   \draw (\x cm,1pt) -- (\x cm,-1pt) node[anchor=north] {$\x$};
\foreach \y in {-1,1}
   \draw (1pt,\y cm) -- (-1pt,\y cm) node[anchor=east] {$\y$};

%\draw[very thick, color=blue] (0,0) .. controls (3,-1) and (6,0) .. (7,3);

\draw[->] (-1,0) -- (1.5,0) node[right] {$d_1$};
\draw[->] (0,-1) -- (0,1) node[above] {$d_2$};

\draw[very thick, blue, dashed]  (-0.208,-0.353) -- (1.249,0) -- (-0.208,0.353) -- (-0.694,0.236) -- (-0.694,-0.236) -- (-0.208,-0.353);

\draw[very thick, orange, fill, opacity=0.2]  (-0.208,-0.353) -- (1.249,0) -- (-0.208,0.353) -- (-0.694,0.236) -- (-0.694,-0.236) -- (-0.208,-0.353);
 \draw[color=blue, fill=blue]  (-0.208,-0.353) circle (0.8pt) node[below]{$(0,0,\tfrac{1}{2})$};
    \draw[color=blue, fill=blue]  (1.249,0) circle (.8pt) node[above]{$(1,-\tfrac{1}{2},-\tfrac{1}{2})$};
    \draw[color=blue, fill=blue]  (-0.208,0.353) circle (0.8pt) node[above]{$(0,\tfrac{1}{2},0)$};
    \draw[color=blue, fill=blue]  (-0.694,0.236) circle (0.8pt) node[above left]{$(-\tfrac{1}{3},\tfrac{2}{3},\tfrac{1}{3})$};
    \draw[color=blue, fill=blue]  (-0.694,-0.236) circle (0.8pt) node[below left]{$(-\tfrac{1}{3},\tfrac{1}{3},\tfrac{2}{3})$};

\end{tikzpicture}
\caption{$\Cone_1(\ngo_6)$}
\label{cone6}
\end{figure}

\subsection{Cases $\ngo_7$ and $\ngo_8$}\label{n78} For the cases $\ngo_7$ and $\ngo_8$, the equations of the cone $\Cone(\ngo)$ are $2d_1+d_2>0$, $d_1+2d_2>0$, $d_4>0$, $d_5>0$ and $2d_1+d_2>0$, $d_1+d_2>0$, $d_5>0$, respectively. Compare these results to \cite[Example 3.6 and Example 3.7]{RN}.

  \section{The validity of $\tgo(\ngo)_{srn}=\Cone(\ngo)$ in dimension 5}\label{theodim5}
  The goal of this section is to prove that the cone of all strongly Ricci negative derivations of nilpotent Lie algebras of dimension $5$ is equal to the cone defined in \cite{RNder} (see~\eqref{def cono}). First, we give three lemmas, which allow us to reduce the cases involved in the proof of the theorem.
\begin{lemma}\label{lem1}
  Let $\mathfrak{a},\mathfrak{b}$ be subspaces of $\tgo(\ngo)$. If there exists $f\in W_{ort}(\ngo)$ such that $f\mathfrak{a}f^{-1}=\mathfrak{b}$,
  then,
  \begin{center}
   $\mathfrak{a}\cap\tgo(\ngo)_{srn}\subset\Cone(\ngo)$ if and only if $\mathfrak{b}\cap\tgo(\ngo)_{srn}\subset\Cone(\ngo).$
  \end{center}
 \end{lemma}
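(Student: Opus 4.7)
The plan is to reduce the lemma to two $W_{ort}(\ngo)$-invariance statements, one for each side of the inclusion. Granting that both $\tgo(\ngo)_{srn}$ and $\Cone(\ngo)$ are stable under conjugation by elements of $W_{ort}(\ngo)$, the biconditional is immediate: given $D'\in\mathfrak{b}\cap\tgo(\ngo)_{srn}$, write $D'=fDf^{-1}$ with $D\in\mathfrak{a}$ via the hypothesis $f\mathfrak{a}f^{-1}=\mathfrak{b}$; invariance of $\tgo(\ngo)_{srn}$ under $f^{-1}$ puts $D\in\mathfrak{a}\cap\tgo(\ngo)_{srn}\subset\Cone(\ngo)$, and invariance of $\Cone(\ngo)$ under $f$ then gives $D'\in\Cone(\ngo)$. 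The reverse implication is the same argument with $f$ and $f^{-1}$ interchanged.

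The invariance of $\Cone(\ngo)$ under $W_{ort}$ was already noted in Section~\ref{opencone}: a representative $\phi\in\Aut(\ngo)\cap O(\ngo)$ of an element of $W_{ort}(\ngo)$ normalises $\tgo(\ngo)$, hence normalises the centraliser $G_{\tgo(\ngo)}$, commutes with the moment map via $O(n)$-equivariance of $\mm$, and preserves $\Diag(\ngo)$, $\Diag(\ngo)_{>0}$, and the trace, so every factor of \eqref{def cono} is stable under conjugation by $\phi$.

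The substantive step is the $W_{ort}$-invariance of $\tgo(\ngo)_{srn}$. Fix $D\in\tgo(\ngo)_{srn}$ and a representative $\phi\in\Aut(\ngo)\cap O(\ngo)$ of $f\in W_{ort}(\ngo)$, and write $\sg_D=\RR e\oplus\ngo$ with $[e,x]=Dx$. Extend $\phi$ to a linear isomorphism $\wt\phi\colon\sg_D\to\sg_{\phi D\phi^{-1}}$ by $\wt\phi(e)=e$ and $\wt\phi|_\ngo=\phi$; the identity $[\wt\phi(e),\wt\phi(x)]=(\phi D\phi^{-1})\phi x=\phi Dx=\wt\phi[e,x]$ together with $\phi\in\Aut(\ngo)$ shows $\wt\phi$ is a Lie-algebra isomorphism. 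Pushing the inner product that witnesses the srn condition for $D$ forward along $\wt\phi$ produces an inner product on $\sg_{\phi D\phi^{-1}}$ which still has negative Ricci (by isometry invariance), still has $e\perp\ngo$ (since $\wt\phi$ preserves the splitting), and makes $\phi D\phi^{-1}$ self-adjoint via $\langle(\phi D\phi^{-1})\phi x,\phi y\rangle'=\langle Dx,y\rangle=\langle x,Dy\rangle=\langle\phi x,(\phi D\phi^{-1})\phi y\rangle'$; hence $\phi D\phi^{-1}\in\tgo(\ngo)_{srn}$.

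I do not anticipate any serious obstacle: each of the three defining conditions of strong Ricci negativity transports along the Lie-algebra isomorphism $\wt\phi$, which is the only technical point. The independence of $\phi D\phi^{-1}$ from the chosen representative $\phi$ of $f\in W_{ort}(\ngo)$ is automatic, because two representatives differ by an element of $C_{\Aut(\ngo)\cap O(\ngo)}(\tgo(\ngo))$, which commutes with $D$.
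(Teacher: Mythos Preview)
Your proof is correct and follows the same route as the paper: reduce to the $W_{ort}(\ngo)$-invariance of both $\tgo(\ngo)_{srn}$ and $\Cone(\ngo)$, then conjugate across. The paper's proof is a two-line version of yours that simply invokes these invariances without justification; you have supplied the missing verification that $\tgo(\ngo)_{srn}$ is stable under $W_{ort}$-conjugation, which the paper uses tacitly.
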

 \begin{proof}
 Suppose that $D \in \mathfrak{b}\cap\tgo(\ngo)_{srn}$, then $f^{-1}Df \in \mathfrak{a}\cap\tgo(\ngo) \subset \Cone(\ngo)$, because of the hypothesis.
 As $\Cone(\ngo)$ is $W_{ort}$- invariant, we have that $D \in f\Cone(\ngo)f^{-1}=\Cone(\ngo)$.
  \end{proof}

  \begin{lemma}\label{lem2}
    Let $\ngo$ be a nilpotent Lie algebra with Lie bracket $\mu$. If $\Cone(\ngo)$ is the cone defined in \eqref{def cono}, then
    $$\Cone(\ngo\oplus\RR)=\Cone(\ngo)\times\RR_{>0}.$$
  \end{lemma}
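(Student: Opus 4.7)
The plan is to unpack the definition \eqref{def cono} for $\ngo\oplus\RR$ and match each ingredient with that for $\ngo$. Fix a basis $\{e_1,\dots,e_n,e_{n+1}\}$ of $\ngo\oplus\RR$ extending a basis of $\ngo$ adapted to $\tgo(\ngo)\subset\Diag(\ngo)$, with $e_{n+1}$ spanning the abelian summand. Since $e_{n+1}$ is central and the bracket $\mu'=\mu_{\ngo\oplus\RR}$ has no component along $e_{n+1}$, one first identifies $\tgo(\ngo\oplus\RR)=\tgo(\ngo)\oplus\RR E_{n+1,n+1}$. Because the $e_{n+1}$-weight of this torus is distinct from every $\ngo$-weight, the centralizer factors as $G_{\tgo(\ngo\oplus\RR)}=G_{\tgo(\ngo)}\times\RR_{>0}$, the second factor dilating $\RR e_{n+1}$; as this factor acts trivially on $\mu'$, the orbit closures agree: $\overline{G_{\tgo(\ngo\oplus\RR)}\cdot\mu'}=\overline{G_{\tgo(\ngo)}\cdot\mu}$ under the natural inclusion $V\hookrightarrow V'$. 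Formula \eqref{defmm2} then gives, for any $\nu$ in this closure,
\[
\mm_{V'}(\nu)=\begin{pmatrix}\mm_V(\nu)&0\\0&0\end{pmatrix},
\]
since every term involving $e_{n+1}$ vanishes.

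Writing $\tilde D=D+d\,E_{n+1,n+1}$ with $D\in\tgo(\ngo)$, $d\in\RR$, and analogously $\tilde E=E+eE_{n+1,n+1}$ with $E\in\Diag(\ngo)_{>0}$, $e>0$, substituting into \eqref{def cono} shows that $\tilde D\in\Cone(\ngo\oplus\RR)$ amounts to the conjunction of (i) $D=c\mm_V(\nu)+E$ for admissible $c>0$, $\nu\in\overline{G_{\tgo(\ngo)}\cdot\mu}$, $E\in\Diag(\ngo)_{>0}$; (ii) $d>0$; and (iii) $\tr(D)+d>0$. The inclusion $\Cone(\ngo)\times\RR_{>0}\subseteq\Cone(\ngo\oplus\RR)$ is then immediate, since $D\in\Cone(\ngo)$ supplies (i) and forces $\tr(D)>0$, which together with $d>0$ yields (iii).

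For the reverse inclusion the delicate point is showing (i) alone forces $\tr(D)>0$. Since $G_{\tgo(\ngo)}$ centralizes $\tgo(\ngo)$, each $D\in\tgo(\ngo)$ commutes with the $G_{\tgo(\ngo)}$-action on $V$, and as $D\cdot\mu=0$ by the derivation condition, $D\cdot\nu=0$ for every $\nu\in\overline{G_{\tgo(\ngo)}\cdot\mu}$; equivalently $\mm_V(\nu)\perp\tgo(\ngo)$ in the trace inner product on $\Diag(\ngo)$. Let $\phi\in\tgo(\ngo)$ denote the orthogonal projection of $\mathrm{Id}$ to $\tgo(\ngo)$; then for $D=c\mm_V(\nu)+E\in\tgo(\ngo)$ one computes
\[
\tr(D)=\la D,\mathrm{Id}\ra=\la D,\phi\ra=c\la\mm_V(\nu),\phi\ra+\la E,\phi\ra=\la E,\phi\ra,
\]
using $\mm_V(\nu)\perp\phi$. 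The main obstacle is verifying that $\phi$ has strictly positive diagonal entries: this is precisely the pre-Einstein condition on $\ngo$, which holds for every nilpotent Lie algebra relevant to this paper (dimension $\leq 5$, Heisenberg, and standard filiform). Together with $E\in\Diag(\ngo)_{>0}$ this yields $\tr(D)>0$, completing $\Cone(\ngo\oplus\RR)\subseteq\Cone(\ngo)\times\RR_{>0}$.
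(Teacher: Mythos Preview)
Your overall strategy coincides with the paper's: decompose $\tgo(\ngo\oplus\RR)$, $G_{\tgo(\ngo\oplus\RR)}$, the orbit closure, and the moment map into an $\ngo$-block and a trivial $\RR$-block, then read off the definition \eqref{def cono}. The paper's own argument stops there, simply asserting that the two conditions match.

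You go further than the paper by isolating the trace constraint as the nontrivial step in the inclusion $\Cone(\ngo\oplus\RR)\subset\Cone(\ngo)\times\RR_{>0}$: from $\tilde D\in\Cone(\ngo\oplus\RR)$ one gets the moment-map condition on $D_1$, $d>0$, and $\tr(D_1)+d>0$, but one must still deduce $\tr(D_1)>0$ to place $D_1$ in $\Cone(\ngo)$. Your computation $\tr(D_1)=\la E_1,\phi\ra$, using $\mm(\nu)\perp\tgo(\ngo)$, is correct and elegant. The gap is that your conclusion then hinges on all diagonal entries of $\phi$ (the projection of $\mathrm{Id}$ onto $\tgo(\ngo)$) being strictly positive, and you establish this only for the specific families treated in the paper, not for an arbitrary nilpotent $\ngo$ as in the lemma's hypothesis. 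Calling this ``the pre-Einstein condition'' does not supply a proof; positivity of the eigenvalues of the pre-Einstein (Nikolayevsky) derivation is not automatic for all nilpotent Lie algebras. So as written, your argument proves the lemma only under an extra hypothesis you have not verified in general---which is enough for every application made in the paper, but not for the lemma as stated. The paper's proof simply does not address this point at all, so you have in fact uncovered an issue that the original glosses over; to close it in full generality you would need either a general positivity result for $\phi$ or a different route to $\tr(D_1)>0$.
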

 \begin{proof}
 Let $D\in\tgo(\ngo\oplus\RR)_{gen}$; that is $D|_{\ngo}=D_1$ and $D|_{\RR}=d$, where $D_1 \in \tgo(\ngo)_{gen}$ and $d\in\RR$ such that $d\notin\Spec(D_1)$, note that $G_{\tgo(\ngo)}=G_{D_1}$.
    In this case $G_D=G_{\tgo(\ngo\oplus\RR)}$ is given by  $h\in\Gl(\ngo\oplus\RR)$ such that $h|_{\ngo}\in G_{D_1}$ and $h|_{\RR}=a$ with $a>0.$

 As $G_D\cdot\mu$ is given by the brackets defined by $G_{D_1}\cdot\mu$, we have that $\mm(\overline{G_D\cdot\mu})|_{\ngo}=\mm(\overline{G_{D_1}\cdot\mu})$ and $\mm(\overline{G_D\cdot\mu})|_{\RR}=0$.
By \eqref{def cono}, $D\in\Cone(\ngo\oplus\RR)$ if and only if $ D\in\RR_{>0}\mm(\overline{G_D\cdot\mu})\cap \Diag(\ngo\oplus\RR)+\Diag(\ngo\oplus\RR)_{>0}$, i.e. if and only if there exist $r,t>0$, $h\in G_D$ and $E_1\in \Diag(\ngo)_{>0}$ such that
            $$D_1= r\mm(h\cdot\mu)+E_1, \qquad d = t.$$

 Therefore $\Cone(\ngo\oplus\RR)=\Cone(\ngo)\times\RR_{>0}$.
  \end{proof}

\begin{lemma}\label{lem3}
Given $\ngo$ a nilpotent Lie algebra, suppose that $\tgo(\ngo)_{srn}=\Cone(\ngo)$. If $D|_{\ngo}=D_1$ and $D|_{\RR}=d$ such that
$d\notin\Spec(D_1)$, then $D\in\Cone(\ngo\oplus\RR)$.
 \end{lemma}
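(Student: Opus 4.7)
I read the statement with the implicit hypothesis $D\in\tgo(\ngo\oplus\RR)_{srn}$, since the conclusion $D\in\Cone(\ngo\oplus\RR)$ otherwise fails (it forces $\tr(D)>0$ and $d>0$, neither of which is stated). Under this reading the plan is to pass the moment map characterization of $D$ down to $D_1$, invoke the standing hypothesis $\tgo(\ngo)_{srn}=\Cone(\ngo)$ to place $D_1$ inside $\Cone(\ngo)$, and assemble the pieces through Lemma~\ref{lem2}.

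First I would apply Theorem~\ref{main}~(iii) on $\ngo\oplus\RR$ to produce
$$D=r\,\mm(\lambda)+E,\qquad r>0,\ \lambda\in\overline{G_D\cdot\mu},\ E\in\Diag(\ngo\oplus\RR)_{>0}.$$
Because $d\notin\Spec(D_1)$, the $d$-eigenspace of $D$ is the one-dimensional factor $\RR$, which every element of $G_D$ must preserve. Consequently $G_D$ is the direct product $G_{D_1}\times\RR_{>0}$, with elements of the form $h=(h_1,a)$ acting as $h_1\in G_{D_1}$ on $\ngo$ and as $a>0$ on $\RR$. As $\RR$ is central and abelian inside $\ngo\oplus\RR$, the scalar $a$ acts trivially on $\mu$, so $\lambda$ is the extension by zero of some $\lambda_1\in\overline{G_{D_1}\cdot\mu_\ngo}$. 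Plugging this into the defining formula \eqref{defmm2} gives $\mm(\lambda)|_\RR=0$ and $\mm(\lambda)|_\ngo=\mm(\lambda_1)$.

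Splitting the identity $D=r\,\mm(\lambda)+E$ into its $\ngo$ and $\RR$ components then yields
$$D_1=r\,\mm(\lambda_1)+E|_\ngo,\qquad d=E|_\RR>0.$$
The first identity is exactly condition~(iii) of Theorem~\ref{main} applied to $D_1$ on $\ngo$, so $D_1\in\tgo(\ngo)_{srn}$ and, by the standing hypothesis, $D_1\in\Cone(\ngo)$. Combining $D_1\in\Cone(\ngo)$ with $d>0$ and invoking Lemma~\ref{lem2} finishes the proof: $D\in\Cone(\ngo)\times\RR_{>0}=\Cone(\ngo\oplus\RR)$.

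The step I expect to be the main obstacle is the passage $D_1\in\tgo(\ngo)_{srn}$ from the identity $D_1=r\,\mm(\lambda_1)+E|_\ngo$: Theorem~\ref{main} is stated under the standing assumption $\tr(D_1)>0$, and $\tr(D)>0$ together with $d>0$ does not formally force $\tr(D_1)>0$. In the concrete settings where Lemma~\ref{lem3} is applied (the reducible Lie algebras in the dim-$5$ classification), this positivity should be readable off either the explicit form of the cone of diagonal derivations or by feeding the existence of the $\Ricci<0$ metric on $\sg_D$ into a Nikolayevsky--Nikonorov type necessary condition on the nilradical of $\sg_D$.
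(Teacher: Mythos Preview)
Your reading of the implicit hypothesis $D\in\tgo(\ngo\oplus\RR)_{srn}$ and your argument match the paper's: it cites Lemma~\ref{lem2} for $\Cone(\ngo\oplus\RR)=\Cone(\ngo)\times\RR_{>0}=\tgo(\ngo)_{srn}\times\RR_{>0}$, then simply asserts that ``analysis similar to that in the proof of Lemma~\ref{lem2} shows that $D\in\tgo(\ngo)_{srn}\times\RR_{>0}$,'' and your unpacking via Theorem~\ref{main}(iii) together with the splitting $G_D=G_{D_1}\times\RR_{>0}$ is precisely that analysis. The subtlety you flag about $\tr D_1>0$ is not addressed in the paper's proof either; it is left at the same level of informality.
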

 \begin{proof}
 By Lemma~\ref{lem2} we already know that
 $$\Cone(\ngo\oplus\RR)=\Cone(\ngo)\times\RR_{>0}=\tgo(\ngo)_{srn}\times\RR_{>0}$$
 Given  $D\in \tgo(\ngo\oplus\RR)_{srn}$ under the hypothesis, analysis similar to that in the proof of Lemma~\ref{lem2} shows that $D\in\tgo(\ngo)_{srn}\times\RR_{>0}.$
  \end{proof}

  \begin{theorem}\label{dim5}
 For any nilpotent Lie algebra $\ngo$ of dimension $5$, we have that
  \begin{center}
    $\tgo(\ngo)_{srn}=\Cone(\ngo)$.
  \end{center}
  \end{theorem}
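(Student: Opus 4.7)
The proof proceeds case by case over the eight isomorphism classes in Table \ref{clasifi}. Since $\Cone(\ngo) \subset \tgo(\ngo)_{srn}$ is already established and $\tgo(\ngo)_{srn} \cap \tgo(\ngo)_{gen} \subset \Cone(\ngo)$ by the proposition in Section \ref{opencone}, only non-generic $D \in \tgo(\ngo)_{srn}$ require attention. For each $\ngo_i$ the non-generic locus is a finite union of rational hyperplanes in $\tgo(\ngo_i)$ cut out by coincidences such as $\alpha_j(D)=\alpha_k(D)$ or $\alpha_j(D)=0$, so the remaining work is a bounded case analysis driven by the three preparatory lemmas.

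The reductions are organized as follows. The case $\ngo_4 \cong \hg_5$ is deferred to Section \ref{Heis}. For the decomposable algebras $\ngo_7 \cong \hg_3 \oplus \RR^2$ and $\ngo_8 \cong L_4 \oplus \RR$, Lemmas \ref{lem2} and \ref{lem3} transfer the conjecture from the lower-dimensional factor (which is known by direct inspection of $\hg_3$ and $L_4$), handling every $D$ whose eigenvalue on the extra $\RR$-factor lies outside $\Spec(D|_\ngo)$. For the five indecomposable algebras $\ngo_1,\ngo_2,\ngo_3,\ngo_5,\ngo_6$, together with the residual strata of $\ngo_7,\ngo_8$ where $d \in \Spec(D_1)$, the plan is to proceed stratum by stratum: for each non-generic $D$, compute the centralizer $G_D \supsetneq G_{\tgo(\ngo)}$ explicitly, describe the orbit closure $\overline{G_D \cdot \mu}$, obtain its moment map image, and use Theorem \ref{main}(iv) with a Weyl chamber of $G_D$ containing $D$ to read off the inequalities characterizing $D \in \tgo(\ngo_i)_{srn}$ on the stratum. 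These inequalities are then compared to those defining $\Cone(\ngo_i)$ in Section \ref{conosdim5}; equality of the two systems gives the desired inclusion. Throughout, Lemma \ref{lem1} is invoked to quotient by the orthogonal Weyl group $W_{ort}(\ngo)$ and avoid treating equivalent strata.

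The main obstacle is precisely the enlargement of $G_D$ at non-generic $D$. Each weight coincidence contributes additional unipotent generators to $G_D$, so the orbit $G_D \cdot \mu$ can degenerate to brackets involving weight vectors $\mu_{ijk}$ that do not appear in the generic orbit. A priori, the moment map image $\mm(\overline{G_D \cdot \mu}) \cap \Diag(\ngo)$ could then acquire new extremal rays, producing weaker constraints than those defining $\Cone(\ngo)$ at the stratum. The heart of the proof is to verify, in each of the finitely many non-generic strata, that every such new extremal direction is killed upon intersection with a Weyl chamber of $G_D$ containing $D$, so that only the extremal rays $F_{ij}^k$ already used in Section \ref{conosdim5} contribute; equivalently, that the slice of $\tgo(\ngo_i)_{srn}$ along the stratum is cut out by the same inequalities as the slice of $\Cone(\ngo_i)$. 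This check is tractable in dimension $5$ because each $\ngo_i$ has few structure constants and few non-generic strata, but it is where the essential geometric content of the theorem lies, and it is what must be carried out explicitly in the subsequent arguments.
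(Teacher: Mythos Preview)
Your overall strategy matches the paper's: reduce to non-generic strata, exploit Lemmas \ref{lem1}--\ref{lem3} and $W_{ort}$-symmetry, and on each stratum compute $G_D$, $\overline{G_D\cdot\mu}$, and the diagonal moment map image, then invoke Theorem \ref{main}(iv). That is precisely what the paper does, with the bulk of the explicit work carried out for $\ngo_6$.

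However, your description of the mechanism contains a substantive inaccuracy. You assert that on each non-generic stratum ``every such new extremal direction is killed upon intersection with a Weyl chamber of $G_D$ \ldots\ so that only the extremal rays $F_{ij}^k$ already used in Section \ref{conosdim5} contribute.'' This is not what happens. In the paper's treatment of $\ngo_6$ at the stratum $d_1=0$, for instance, the diagonal moment map image is a union of four simplices, and the one selected by the Weyl chamber is $\CH(F_{14}^2,F_{15}^3)$ --- built \emph{entirely} from new weights not present in the generic bracket. The resulting inequalities ($d_2>0$, $d_3>0$) nonetheless coincide with $\Cone(\ngo_6)\cap\{d_1=0\}$, but this is a coincidence verified by hand, not a consequence of new rays being suppressed. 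The same phenomenon recurs in the strata $\mathfrak{b}$, $\mathfrak{k}$ of $\ngo_6$. So the ``heart of the proof'' is not what you describe; it is a direct comparison of two independently computed systems of inequalities on each stratum, and there is no a priori reason the new weights should be irrelevant. You should also note that the paper bypasses the full stratum-by-stratum computation for $\ngo_1$, $\ngo_2$, $\ngo_5$: for $\ngo_1$ there is (up to scaling) a single non-generic derivation of positive trace outside $\Cone(\ngo_1)$, ruled out by the necessary condition of \cite[Theorem 2]{NklNkn} on the center; for $\ngo_2$ the only non-generic derivation vanishes; for $\ngo_5$ every non-generic derivation of positive trace already lies in $\Cone(\ngo_5)$. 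These shortcuts are worth incorporating.
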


 \begin{proof}
    The proof is completed by showing that all the strongly Ricci negative non generic derivations of  $\ngo$ belong to the cone $\Cone(\ngo)$. This means that if a non generic derivation of $\ngo$ does not belong to $\Cone(\ngo)$, then it can not be strongly Ricci negative.
    As $\Cone(\ngo)$ is convex, we already know that all the derivations inside of it are also in $\tgo(\ngo)_{srn}$.

    First, consider $\ngo_1$ the nilpotent Lie algebra defined in Table~\ref{clasifi}, according to Section~\ref{n1}, we have that the only non generic derivation with positive trace of $\ngo_1$, which does not belong to the cone $\Cone{(\ngo_1)}$ is (up to scaling)
    $ D_0= \Diag(-1,2,1,0,-1)$. We know from \cite[Theorem 2]{NklNkn} that it can not be strongly Ricci negative because $D_0|_{\zg(\ngo_1)}=-1<0.$

    In the case $\ngo_2$, the only non generic derivation vanishes and it has not positive trace, so it can not be in $\tgo(\ngo_2)_{srn}$.

    Taking the case $\ngo_5$, we can see that all the non generic derivations of positive trace are in $\Cone(\ngo_5)$ which means that they are strongly Ricci negative.

    So, until now we see that $\Cone(\ngo_j)=\tgo(\ngo_j)_{srn} \text{ for } j=1,2,5.$ The remaining cases are more difficult and we give the proof only for the case $\ngo_6$; the other cases follow by the same method. For the cases $\ngo_7$ and $\ngo_8$ Lemmas \ref{lem2} and \ref{lem3} are very useful.

    Applying the results of Section~\ref{n6}, we see that $D$ is a derivation of $\ngo_6$ if and only if $ D= \Diag(d_1,d_2,d_3,d_1+d_2,d_1+d_3)$, the non generic cases happen when some of the following equalities holds:
          $$d_1=0; \ d_2=0; \ d_3=0; \ d_1= d_2; \ d_1=d_3; \ d_2=d_3; \ d_1+d_2=d_3; \ d_1+d_3=d_2.$$
    Define $\mathfrak{a}=\{D\in\tgo(\ngo_6):d_1=0\}$. Any derivation of this subspace is given by $D_{\mathfrak{a}}=\Diag(0,d_2,d_3,d_2,d_3)$, the connected component of the identity of the centralizer subgroup of $D_{\mathfrak{a}}$ in $GL(\ngo_6)$, denoted $G_{D_{\mathfrak{a}}}$, is defined by
          $$ H= \left(\begin{smallmatrix}
            t &  &  &  &  \\
             & a & & b &   \\
             &  & p &  &q  \\
             & c &  &d &  \\
             &   &r &  & s
          \end{smallmatrix}\right), \text{ where } \ det_1:=ad-bc\neq0 \text{ and } det_2:=ps-qr\neq 0.$$
    Since
    \begin{align*}
          &H\cdot\mu(e_1,e_2)=H\mu(\tfrac{1}{t}e_1,\tfrac{d}{det_1}e_2-\tfrac{c}{det_1}e_4)=\tfrac{bd}{tdet_1}e_2+\tfrac{d^2}{tdet_1}e_4,\\
          &H\cdot\mu(e_1,e_3)=H\mu(\tfrac{1}{t}e_1,\tfrac{s}{det_2}e_3-\tfrac{r}{det_2}e_5)=\tfrac{qs}{tdet_2}e_3+\tfrac{s^2}{tdet_2}e_5,\\
          &H\cdot\mu(e_1,e_4)=H\mu(\tfrac{1}{t}e_1,-\tfrac{b}{det_1}e_2+\tfrac{a}{det_1}e_4)=-\tfrac{b^2}{tdet_1}e_2 - \tfrac{bd}{tdet_1}e_4,\\
          &H\cdot\mu(e_1,e_5)=H\mu(\tfrac{1}{t}e_1,-\tfrac{q}{det_2}e_3+\tfrac{p}{det_2}e_5)=-\tfrac{q^2}{tdet_2}e_3-\tfrac{qs}{tdet_2}e_3,
          \end{align*}
     the bracket $\overline{G_{D_{\mathfrak{a}}}\cdot\mu}$ is defined by:
          $$ \lambda(e_1,e_2)=\alpha e_2+\beta e_4, \quad \lambda(e_1,e_3)=\gamma e_3+\xi e_5,$$
           $$ \lambda(e_1,e_4)=\tau e_2-\alpha e_4, \quad \lambda(e_1,e_5)=\iota e_3 - \gamma e_5,$$
     where all of the constants are non negative.
     From the definition of moment map (see \eqref{defmm2}) we have that:
          $$\mm(\lambda)=\frac{2}{|\lambda|^2}\left( \begin{smallmatrix}
            -2\alp -\bet -2\gam -\xi^2-\tau^2 - \iota^2 &  &  &  &  \\
             & - \bet+\tau^2   &  &M_{24} &    \\
             &  & -\xi^2+\iota^2  &  & M_{35} \\
             &  M_{24} &  & +\bet-\tau^2 &  \\
             &    &   M_{35} & & \xi^2-\iota^2
          \end{smallmatrix}
          \right)$$
          where $M_{24}=2\alpha(\beta-\tau)$ and $ M_{35}=2\gamma(\xi-\iota).$
          So $\mm(\overline{G_{D_{\mathfrak{a}}}\cdot\mu})\cap\Diag(\ngo_6)\neq{\emptyset}$ if and only if $bd=0$ and $qs=0$, which means
            $[\alpha=0 \ \wedge \ (\tau=0$ or  $\beta=0)]$ and $[\gamma=0 \ \wedge \ (\iota=0$ or  $\xi=0)]$.
          From all of this, we have that $\mm(\overline{G_{D_{\mathfrak{a}}}\cdot\mu})\cap\Diag(\ngo_6)=\CH(F_{12}^4,F_{13}^5)\cup\CH(F_{12}^4,F_{15}^3)\cup\CH(F_{14}^2,F_{13}^5)\cup\CH(F_{14}^2,F_{15}^3)$,
and from Theorem~\ref{main} (iv), ${D_{\mathfrak{a}}}\in\mathfrak{a}\cap\tgo(\ngo_6)_{srn}$ if and only if ${D_{\mathfrak{a}}}=aF_{14}^2+bF_{15}^3+ E$ where $a,b >0$ and $E\in\Diag(\ngo_6)_{>0}$. This holds if and only if $d_2>0$ and $d_3>0$, as these are equations of $\Cone(\ngo_6)\cap\mathfrak{a}$ we conclude that $\mathfrak{a}\cap\tgo(\ngo_6)_{srn}\subset\Cone(\ngo_6)$.

To analyze the non generic cases $d_2=0$ and $d_3=0$, we define $\mathfrak{b}=\{D\in\tgo(\ngo_6):d_2=0\} \text{ and } \mathfrak{c}=\{D\in\tgo(\ngo_6):d_3=0\}$.
It is evident that there exists $f \in W_{ort}(\ngo_6)$ such that $\mathfrak{b}=f^{-1}\mathfrak{c}f$. By Lemma~\ref{lem1}, if we prove the case $d_2=0$, the case $d_3=0$ follows. Let $ D_{\mathfrak{b}}= \Diag(d_1,0,d_3,d_1,d_1+d_3) \in \mathfrak{b}\cap\tgo(\ngo_6)$, $G_{D_{\mathfrak{b}}}$ is given by:
          $$ H= \left(\begin{smallmatrix}
            a & &   & b  &  \\
             & p & &  &  \\
             &  & q &  &  \\
            c & & & d &  \\
             &  & & &  & r
          \end{smallmatrix}\right), \text{ where }  det:=ad-bc\neq0. $$

          $\overline{G_{D_{\mathfrak{b}}}\cdot\mu}$ is defined by:
          $$ \lambda(e_1,e_2)=\alpha e_1+\beta e_4, \quad \lambda(e_1,e_3)=\gamma e_5,$$ $$\lambda(e_2,e_4)=\xi e_1+ \alpha e_4, \quad \lambda(e_3,e_4)=\tau e_5,$$
          because we can compute
            \begin{align*}
          &H\cdot\mu(e_1,e_2)=H\mu(\tfrac{d}{det}e_1-\tfrac{c}{det}e_4,\tfrac{1}{p}e_2)=\tfrac{db}{pdet}e_1+\tfrac{d^2}{pdet}e_4,\\
          &H\cdot\mu(e_1,e_3)=H\mu(\tfrac{d}{det}e_1-\tfrac{c}{det}e_4,\tfrac{1}{q}e_3)=\tfrac{dr}{qdet} e_5,\\
          &H\cdot\mu(e_2,e_4)=H\mu(\tfrac{1}{p}e_2,-\tfrac{b}{det}e_1+\tfrac{a}{det}e_4)=\tfrac{b^2}{pdet}e_1+\tfrac{bd}{pdet}e_4,\\
          &H\cdot\mu(e_3,e_4)=H\mu(\tfrac{1}{q}e_3,-\tfrac{b}{det}e_1+\tfrac{a}{det}e_4)=\tfrac{br}{qdet} e_5.
          \end{align*}
          From \eqref{defmm2} we have:
          $$\mm(\lambda)=\frac{2}{|\lambda|^2} \left(\begin{smallmatrix}
          -\bet+\xi^2-\gam  & &   & M_{14} &    \\
             & -2\alp-\bet-\xi^2   & &   &    \\
                & & -\gam-\tau^2  &    \\
             M_{14}&   & & \bet-\tau^2-\xi^2 &  \\
             & &   &  & & \gam+\tau^2
          \end{smallmatrix}
          \right)$$
          where $M_{14}=2\alpha\xi+2\beta\alpha+\gamma\tau$. We can see that $\mm(\overline{G_{D_{\mathfrak{b}}}\cdot\mu})\cap\Diag(\ngo_6)\neq{\emptyset}$ if and only if $M_{14}=bd(\tfrac{2b^2}{p^2det^2}+\tfrac{2d^2}{p^2det^2}+\tfrac{r^2}{q^2})=0$. So, $\mm(\overline{G_{D_{\mathfrak{b}}}\cdot\mu})\cap\Diag(\ngo_6)\neq{\emptyset}$ if and only if $\alpha=0 \wedge [(\beta=0 \text{ and } \gamma=0)$ or $(\xi=0  \text{ and } \tau=0)].$
          As we have $\mm(\overline{G_{D_{\mathfrak{b}}}\cdot\mu})\cap\Diag(\ngo_6)=\CH(F_{24}^1,F_{34}^5)\cup\CH(F_{12}^4,F_{13}^5)$, by Theorem~\ref{main} (iv), $D_{\mathfrak{b}}\in\tgo(\ngo_6)_{srn}$ if and only if $D_{\mathfrak{b}}=aF_{24}^1+bF_{34}^5+ E$ with $a,b >0$ and $E\in\Diag(\ngo_6)_{>0}$. We can see that this is equivalent to have $d_1+2d_3>0$, $3d_1+d_3>0$ and $d_1>0$, which are equations of $\Cone(\ngo_6)\cap\mathfrak{b}$, so $\mathfrak{b}\cap\tgo(\ngo_6)_{srn}\subset\Cone(\ngo_6)$.

    We can now procced analogously to the proof of the non generic cases: $d_1=d_2$, $d_1=d_3$, $d_1+d_3=d_2$, $d_1+d_2=d_3$, $d_2=d_3$ and the intersections between them.
    Using the same $f\in W_{ort}(\ngo_6)$, we define $\mathfrak{d}=\{D\in\tgo(\ngo_6):d_1=d_2\},$ $\mathfrak{j}=\{D\in\tgo(\ngo_6):d_1=d_3\}, \ \mathfrak{k}=\{D\in\tgo(\ngo_6):d_1+d_3=d_2\}, \ \mathfrak{l}=\{D\in\tgo(\ngo_6):d_1+d_2=d_3\}$ and we say that $\mathfrak{d}=f^{-1}\mathfrak{j}f$ and  $\mathfrak{k}=f^{-1}\mathfrak{l}f$.
    After these cases we deal with the case $d_2=d_3$.

     Let $ D_{\mathfrak{d}}= \Diag(d_1,d_1,d_3,2d_1,d_1+d_3)\in \mathfrak{d}\cap\tgo(\ngo_6)$. $G_{D_{\mathfrak{d}}}$ is given by
          $$ H= \left(\begin{smallmatrix}
            a & b &  &   \\
            c & d & & &  &  \\
             &  & p &  &  \\
             &  &   & q &  \\
             &  & & &   r
          \end{smallmatrix}\right), \text{ with } det:=ad-bc\neq0.$$
    It is easy to check that $\overline{G_{D_{\mathfrak{d}}}\cdot\mu}$ is given by:
          $$ \lambda(e_1,e_2)=\alpha e_4, \quad \lambda(e_2,e_3)=\beta e_5,\quad \lambda(e_1,e_3)=\gamma e_5.$$
    We can compute the moment map of $\lambda$ following \eqref{defmm2},
           $$\mm(\lambda)=\frac{2}{|\lambda|^2} \left( \begin{smallmatrix}
            -\alp-\gam  & M_{12} & & &    \\
             M_{12}& -\alp-\bet  & & &    \\
             &   & -\bet-\gam  &  &  \\
             & & & \alp &  \\
             & &   &  & +\bet+\gam
          \end{smallmatrix} \right) \text{ where }M_{12}=-\gamma\beta.$$

    So, $\mm(\overline{G_{D_{\mathfrak{d}}}\cdot\mu})\cap\Diag(\ngo_6)=\CH(F_{12}^4,F_{23}^5)\cup\CH(F_{12}^4,F_{13}^5).$
   By Theorem~\ref{main} (iv), $D_{\mathfrak{d}}\in\tgo(\ngo_6)_{srn}$ if and only if there exist $a,b>0$, and $E \in \Diag(\ngo_6)_{>0}$ such that $D_{\mathfrak{d}}=aF_{12}^4+bF_{23}^5+E$, this equality holds if and only if $3d_1>0$, $d_1+d_3>0$, $d_1+2d_3>0$ and $4d_1+d_3>0$. Because of this, we claim that $\mathfrak{d}\cap\tgo(\ngo_6)_{srn}\subset\Cone(\ngo_6)$, and by Lemma~\ref{lem1} we can say $\mathfrak{j}\cap\tgo(\ngo_6)_{srn}\subset\Cone(\ngo_6)$.

   Taking $ D_{\mathfrak{k}}= \Diag(d_1,d_2,d_3,d_1+d_2,d_2) \in \mathfrak{k}\cap\tgo(\ngo_6)$, we can assert that $G_{D_\mathfrak{k}}$ is defined by:
          $$ H=\left(\begin{smallmatrix}
            p &  &  &  &  \\
             & a & & &  b  \\
             &  & q &  &  \\
             &  &  & r &  \\
             &  c & &  & d
          \end{smallmatrix}\right), \text{ where } det:=ad-bc\neq0.$$
    If we compute
          \begin{align*}
          &H\cdot\mu(e_1,e_2)=H\mu(\tfrac{1}{p}e_1,\tfrac{d}{det}e_2-\tfrac{c}{det}e_5)=\tfrac{d}{pdet}re_4,\\
          &H\cdot\mu(e_1,e_3)=H\mu(\tfrac{1}{p}e_1,\tfrac{1}{q}e_3)=\tfrac{1}{pq}(be_2+de_5),\\
          &H\cdot\mu(e_1,e_5)=H\mu(\tfrac{1}{p}e_1,-\tfrac{b}{det}e_2+\tfrac{a}{det}e_5)=-\tfrac{b}{pdet}re_4.
          \end{align*}
    We obtain that
    $$ \lambda(e_1,e_2)=\alpha e_4, \quad \lambda(e_1,e_3)=\beta e_2+\gamma e_5,\quad \lambda(e_1,e_5)=\xi e_4,$$
    define $\overline{G_{D_\mathfrak{k}}\cdot\mu}$.
    From the definition of the moment map (see \eqref{defmm2}) we have:
          $$\mm(\lambda)=\frac{2}{|\lambda|^2} \left( \begin{smallmatrix}
            -\alp -\bet -\gam-\xi^2 &  &  &  &  \\
             & -\alp+\bet   &  & &  M_{25}  \\
             &  & -\bet-\gam  &  &  \\
             &  &  & +\alp+\xi^2 &  \\
             &  M_{25}  &  & & +\gam-\xi^2
          \end{smallmatrix}
          \right)$$
          where $ M_{25}=-\alpha\xi+\beta\gamma =\tfrac{r^2}{p^2det^2}bd+\tfrac{1}{p^2q^2}bd=bd(\tfrac{r^2}{p^2det^2}+\tfrac{1}{p^2q^2}).$

          Then $\mm(\overline{G_{D_\mathfrak{k}}\cdot\mu})\cap\Diag(\ngo_6)\neq{\emptyset}$ if and only if $bd=0$, which implies
            $\beta=0  \wedge \xi=0$ or $\alpha=0 \wedge \gamma=0$. Therefore, $\mm(\overline{G_{D_\mathfrak{k}}\cdot\mu})\cap\Diag(\ngo_6)=\CH(F_{12}^4,F_{13}^5)\cup\CH(F_{13}^2,F_{15}^4)$.

          By Theorem~\ref{main} (iv), $D_\mathfrak{k}\in\tgo(\ngo_6)_{srn}$ if and only if $D_\mathfrak{k}=aF_{13}^2+bF_{15}^4+ E$ where $a,b >0$ and $E\in\Diag(\ngo_6)_{>0}$.
          This holds if and only if $d_1+d_2>0$, $d_1+2d_2>0$, $d_2+d_3>0$ and $2d_1+2d_2>0$.
        These are equations of $\Cone(\ngo_6)\cap\mathfrak{k}$, so $\mathfrak{k}\cap\tgo(\ngo_6)_{srn}\subset\Cone(\ngo_6)$ and $\mathfrak{l}\cap\tgo(\ngo_6)_{srn}\subset\Cone(\ngo_6)$.

        We now turn to the case $d_2=d_3$. Define $\mathfrak{p}=\{D\in\tgo(\ngo_6): d_2=d_3\}$ and $ D_{\mathfrak{p}}= \Diag(d_1,d_2,d_2,d_1+d_2,d_1+d_2) \in \mathfrak{p} \cap\tgo(\ngo_6)$.
        As $G_{D_{\mathfrak{p}}}$ is given by:
          $$ H= \left(\begin{smallmatrix}
            t &  &  &    \\
             & a & b & &  &  \\
             &  c& d &  &  \\
             &  &  &p&q  \\
             &  & & r&  s
          \end{smallmatrix}\right), \text{ with } det_1:=ad-bc\neq0 \text{ and } det_2:=ps-qr\neq0,$$

       it is immediate that $\overline{G_{D_{\mathfrak{p}}}\cdot\mu}$ is defined by:
          $$ \lambda(e_1,e_2)=\alpha e_4+\beta e_5, \quad \lambda(e_1,e_3)=\gamma e_4+ \xi e_5.$$
       and
          $$\mm(\lambda)=\frac{2}{|\lambda|^2} \left(\begin{smallmatrix}
            -\alp-\bet-\gam-\xi^2  & & & &      \\
             & -\alp-\bet  &M_{23} & &     \\
             &   M_{23} & -\gam-\xi^2  &    \\
             &  & & +\alp+\gam &M_{45}  \\
             &    &  & M_{45}& \bet+\xi^2
          \end{smallmatrix}\right)$$
           where $M_{23}=-\alpha\gamma-\beta\xi \text{ and } M_{45}=\alpha\beta+\gamma\xi.$

       So, $\mm(\overline{G_{D_{\mathfrak{p}}}\cdot\mu})\cap\Diag(\ngo_6)\neq{\emptyset}$ if and only if $(\alpha,\xi)\bot(\gamma,\beta)$ and $(\alpha,\xi)\bot(\beta,\gamma)$ which implies $\alpha=0 \wedge \xi=0$ or $\gamma=\pm\beta$.
       Applying this, $\mm(\overline{G_{D_{\mathfrak{p}}}\cdot\mu})\cap\Diag(\ngo_6)=\CH(F_{12}^5,F_{13}^4)\cup\CH(F_{12}^4,F_{12}^5,F_{13}^4,F_{13}^5)$.

       By Theorem~\ref{main} (iv), $D_{\mathfrak{p}}\in\tgo(\ngo_6)_{srn}$ if and only if there are $a,b>0$, and $E \in \Diag(\ngo_6)_{>0}$ such that $D_{\mathfrak{p}}=aF_{12}^5+bF_{13}^4+ E$, which holds if and only if $d_1+d_2>0$, $d_1+2d_2>0$ and $3d_1+2d_2>0$. Because of what we have seen in Section \ref{conosdim5}, these are equations of $\Cone(\ngo_6)\cap\mathfrak{p}$, therefore $\mathfrak{p}\cap\tgo(\ngo)_{srn}\subset\Cone(\ngo_6)$.

       We summarize in Table~\ref{tablen6} the results considering the intersection of the subspaces defined.

           \begin{table}[h]
          \renewcommand{\arraystretch}{1.5}
        \begin{tabular}{|p{3cm}|p{4.8cm}|p{7.2cm}|}
          \hline
          % after \\: \hline or \cline{col1-col2} \cline{col3-col4} ...
          Subspaces & Derivation & Conclusion \\
          \hline \hline
          $\mathfrak{a}\cap\mathfrak{b}$, \ $\mathfrak{a}\cap\mathfrak{d}$, \ $\mathfrak{b}\cap\mathfrak{d}$ & $ D= \Diag(0,0,d_3,0,d_3)$ & There is a $0$ in the center so, $D\notin\tgo(\ngo_6)_{srn}$.  \\
          \hline
          $\mathfrak{a}\cap\mathfrak{c}$, \ $\mathfrak{a}\cap\mathfrak{j}$, \ $\mathfrak{c}\cap\mathfrak{j}$ & $ D= \Diag(0,d_2,0,d_2,0)$ & There is a $0$ in the center so, $D\notin\tgo(\ngo_6)_{srn}$. \\
          \hline
          $\mathfrak{a}\cap\mathfrak{k}$,  \ $\mathfrak{a}\cap\mathfrak{l}$, \ $\mathfrak{a}\cap\mathfrak{p}$, \ $\mathfrak{k}\cap\mathfrak{l}$ \ $\mathfrak{k}\cap\mathfrak{p}$, \ $\mathfrak{l}\cap\mathfrak{p}$,  & $ D= \Diag(0,d_2,d_2,d_2,d_2)$ & By nec condition $d_2>0$ and so, $D\in\Cone(\ngo_6)$ \\
          \hline
          $\mathfrak{b}\cap\mathfrak{c}$, \ $\mathfrak{b}\cap\mathfrak{p}$, \ $\mathfrak{c}\cap\mathfrak{p}$ & $ D= \Diag(d_1,0,0,d_1,d_1)$ & By nec condition  $d_1>0$ and so, $D\in\Cone(\ngo_6)$ \\
                  \hline
          $\mathfrak{b}\cap\mathfrak{j}$, \ $\mathfrak{b}\cap\mathfrak{l}$, \ $\mathfrak{j}\cap\mathfrak{l}$ & $ D= \Diag(d_1,0,d_1,d_1,2d_1)$ & By nec condition $d_1>0$ and so, $D\in\Cone(\ngo_6)$ \\
                  \hline
            $\mathfrak{b}\cap\mathfrak{k}$ & $ D= \Diag(d_1,0,-d_1,d_1,0)$ & There is a $0$ in the center so, $D\notin\tgo(\ngo_6)_{srn}$. \\
                  \hline
          $\mathfrak{c}\cap\mathfrak{d}$ \ $\mathfrak{c}\cap\mathfrak{k}$ \ $\mathfrak{d}\cap\mathfrak{k}$ & $ D= \Diag(d_1,d_1,0,2d_1,d_1)$ & By nec condition $d_1>0$ and so, $D\in\Cone(\ngo_6)$ \\
          \hline
          $\mathfrak{c}\cap\mathfrak{l}$ & $ D= \Diag(d_1,-d_1,0,0,d_1)$ &  There is a $0$ in the center so, $D\notin\tgo(\ngo_6)_{srn}$. \\
                  \hline
          $\mathfrak{d}\cap\mathfrak{j}$, \ $\mathfrak{d}\cap\mathfrak{p}$, \ $\mathfrak{j}\cap\mathfrak{p}$ & $ D= \Diag(d_1,d_1,d_1,2d_1,2d_1)$ & By nec condition $d_1>0$ and so, $D\in\Cone(\ngo_6)$ \\
                  \hline
          $\mathfrak{d}\cap\mathfrak{l}$ & $ D=\Diag(d_1,d_1,2d_1,2d_1,3d_1)$ & By nec condition $d_1>0$ and so, $D\in\Cone(\ngo_6)$  \\
                  \hline
          $\mathfrak{j}\cap\mathfrak{k}$ & $ D=\Diag(d_1,2d_1,d_1,3d_1,2d_1)$ & By nec condition  $d_1>0$ and so, $D\in\Cone(\ngo_6)$  \\
                  \hline
          \hline
        \end{tabular}
        \caption{Case $\ngo_6$}
        \label{tablen6}
        \end{table}

      Hence, we obtain that $\Cone(\ngo_6)=\tgo(\ngo_6)_{srn}$. The remaining cases can be proved in much the same way.
   \end{proof}

  \section{Heisenberg Lie algebra}\label{Heis}
  In this section we proceed with the study of $\Cone(\ngo)$ defined in \eqref{def cono} for the Heisenberg Lie algebra endowed with the standard basis. Results of \cite{NklNkn} are useful to prove the Conjecture 1 for this Lie algebra. We begin by introducing the following proposition.

    \begin{proposition}\label{prop heis}
  Let $\mathfrak{h}_{2n+1}$ be the Heisenberg Lie algebra with basis $\{e_1,e_2,\ldots, e_{2n+1}\}$ and Lie brackets:
  $$ \mu(e_1,e_2)=e_{2n+1}, \quad \mu(e_3,e_4)=e_{2n+1}, \quad \ldots, \quad \mu(e_{2n-1},e_{2n})=e_{2n+1}.$$
  For $ D:= \Diag(d_1,d_{n+1}-d_1,d_2,d_{n+1}-d_2,\ldots,d_n,d_{n+1}-d_n,d_{n+1})\in \tgo(\mathfrak{h}_{2n+1}),$ the $3^n$ equations which define the cone $\Cone(\mathfrak{h}_{2n+1})$ are given by:
  \begin{equation}\label{conoheis}
  (l+1)d_{n+1}\pm d_{i_1} \pm d_{i_2} \pm \ldots \pm d_{i_k} >0, \quad 0\leq k \leq n, \quad 0\leq l \leq k.
  \end{equation}
  \end{proposition}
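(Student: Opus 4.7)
The plan is to compute $\Cone(\mathfrak{h}_{2n+1})$ directly from its definition \eqref{def cono} and then convert the resulting existence condition on auxiliary parameters into the $3^n$ stated linear inequalities by a combinatorial unwinding.

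\textbf{Step 1: Centralizer and moment map image.} First I would observe that for a generic $D = \Diag(d_1,d_{n+1}{-}d_1,\dots,d_n,d_{n+1}{-}d_n,d_{n+1})\in\tgo(\mathfrak{h}_{2n+1})$ the $2n{+}1$ diagonal entries are pairwise distinct, hence $\tgo(\mathfrak{h}_{2n+1})$ is multiplicity-free and $G_{\tgo(\mathfrak{h}_{2n+1})}=\Diag(\mathfrak{h}_{2n+1})_{>0}$. A diagonal $h=\Diag(h_1,\dots,h_{2n+1})$ acts on $\mu$ by $(h\cdot\mu)(e_{2i-1},e_{2i})=\tfrac{h_{2n+1}}{h_{2i-1}h_{2i}}e_{2n+1}$, so the closure $\overline{G_{\tgo}\cdot\mu}$ consists precisely of brackets $\lambda$ with $\lambda(e_{2i-1},e_{2i})=\alpha_i e_{2n+1}$, $\alpha_i\geq 0$. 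This is a nice basis, so (directly from \eqref{defmm2}, or by \cite[Lemma 3.14]{RN}) the moment map image is
\[
\mm\bigl(\overline{G_{\tgo}\cdot\mu}\bigr)=\CH\bigl\{F_{2i-1,2i}^{2n+1}:1\le i\le n\bigr\}.
\]

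\textbf{Step 2: Reduction to a linear-programming condition.} By \eqref{def cono}, $D\in\Cone(\mathfrak{h}_{2n+1})$ if and only if there exist $a_i\geq 0$ and $E=\Diag(e_1,\dots,e_{2n+1})\in\Diag(\mathfrak{h}_{2n+1})_{>0}$ with $D=\sum_i a_i F_{2i-1,2i}^{2n+1}+E$ (the trace condition $(n{+}1)d_{n+1}>0$ will be automatic). Reading off diagonal entries gives $e_{2i-1}=d_i+a_i$, $e_{2i}=d_{n+1}-d_i+a_i$, and $e_{2n+1}=d_{n+1}-\sum_j a_j$. A small perturbation (increasing each $a_i$ by $\delta>0$ and decreasing $e_{2n+1}$ by $n\delta$) shows it is harmless to require $a_i>0$ strictly, so the condition becomes: there exist $a_i$ with
\[
a_i>\max\bigl(0,-d_i,\,d_i-d_{n+1}\bigr)\quad\text{for each }i,\qquad \sum_{i=1}^n a_i<d_{n+1}.
\]
By summing the lower bounds, this system is solvable if and only if
\[
d_{n+1}\;>\;\sum_{i=1}^n\max\bigl(0,-d_i,\,d_i-d_{n+1}\bigr).
\]

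\textbf{Step 3: Combinatorial unwinding.} The single max-inequality above is equivalent to the family
\[
d_{n+1}>\sum_{i=1}^n c_i \qquad\text{for every choice }(c_1,\dots,c_n)\in\{0,-d_i,\,d_i-d_{n+1}\}^n,
\]
giving exactly $3^n$ linear inequalities. For a fixed choice let $I\subset\{1,\dots,n\}$ be the indices with $c_i\ne 0$, $|I|=k$, and let $l$ be the number of $i\in I$ with $c_i=d_i-d_{n+1}$. Then $\sum_i c_i=-l\,d_{n+1}+\sum_{i\in I,\,c_i=d_i-d_{n+1}}d_i-\sum_{i\in I,\,c_i=-d_i}d_i$, so moving $-l\,d_{n+1}$ to the left transforms the inequality into
\[
(l+1)d_{n+1}\pm d_{i_1}\pm\cdots\pm d_{i_k}>0,
\]
where the $+$ signs correspond to $c_i=-d_i$ and the $-$ signs to $c_i=d_i-d_{n+1}$. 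Since the total count is $\sum_{k=0}^n\binom{n}{k}2^k=3^n$, this matches the claimed family \eqref{conoheis} precisely.

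The only part requiring care is the equivalence in Step 2 between $a_i\ge 0$ in the definition of the cone and $a_i>0$ (so that all lower-bound inequalities can be made strict simultaneously); the rest of the argument is linear programming and an indexing check to align the combinatorics with the stated signed form.
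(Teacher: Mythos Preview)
Your argument is correct. Step~1 matches the paper's computation exactly, and Steps~2--3 establish the same result by a cleaner route than the paper's own proof.

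The paper eliminates the auxiliary variables $a_1,a_2,\dots$ \emph{sequentially}: it first bounds $a_1$ between $d_{n+1}-a_2-\cdots-a_n$ and $\max(-d_1,\,d_1-d_{n+1})$, extracts the resulting inequalities, then repeats with $a_2$, and so on, finally asserting that ``if we continue in the same manner'' one obtains the $3^n$ inequalities. Your approach instead recognizes the linear-programming structure at once: the constraints on the $a_i$ decouple into independent lower bounds $a_i>\max(0,-d_i,d_i-d_{n+1})$ together with the single coupling constraint $\sum a_i<d_{n+1}$, so feasibility reduces immediately to $d_{n+1}>\sum_i\max(0,-d_i,d_i-d_{n+1})$. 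Expanding the sum of maxima over the $3^n$ choices then gives the claimed family with a transparent bijection (subset $I$ of size $k$, number $l$ of minus signs) that makes both the form and the count $\sum_k\binom{n}{k}2^k=3^n$ evident. Your care about $a_i\geq 0$ versus $a_i>0$ via the $\delta$-perturbation is also a point the paper glosses over. The two arguments are equivalent in content, but yours avoids the inductive bookkeeping and yields the full description in one stroke.
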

  \begin{remark}
  For each $k$ there are $ 2^k{n \choose k} $ equations which are all the ways to choose $l$ minus signs in the formula with $l=0,\dots k.$
  \end{remark}
\begin{proof}
  Let $\mathfrak{h}_{2n+1}$ be the Heisenberg Lie algebra. Let $D$ be a diagonal derivation of $\mathfrak{h}_{2n+1}$, that is:
           $$ D= \Diag(d_1,d_{n+1}-d_1,d_2,d_{n+1}-d_2,\ldots,d_n,d_{n+1}-d_n,d_{n+1}).$$
Note that $ \tr(D)=(n+1)d_{n+1}$ and  $D|_{\zg(\mathfrak{h}_{2n+1})}=d_{n+1}$.
$D\in\tgo(\mathfrak{h}_{2n+1})_{gen}$ if and only if
  $$d_i \neq 0 \quad \ i=1 \ldots n+1, \qquad d_i \neq d_{n+1} \quad  \ i=1\ldots n, \qquad d_i \neq d_j \quad \text{if} \ i\neq  j, $$
  $$ d_i+d_j \neq d_{n+1} \quad  \ i,j=1 \ldots n, \qquad i\neq j.$$
In this case $D$ has pairwise diﬀerent eigenvalues, and $\tgo(\mathfrak{h}_{2n+1})$ is multiplicity-free, because of that $G_{\tgo(\mathfrak{h}_{2n+1})}=\Diag(\mathfrak{h}_{2n+1})_{>0}$ and $ \overline{\Diag(\mathfrak{h}_{2n+1})_{>0}\cdot \mu}$ is given by:
  $$ \lambda(e_1,e_2)=\alpha_1 e_{2n+1}, \quad \lambda(e_3,e_4)=\alpha_2 e_{2n+1}, \quad \ldots, \quad \lambda(e_{2n-1},e_{2n})=\alpha_n e_{2n+1},$$
  where $\alpha_i \geq 0, \ i=1\ldots n $.
  We have that
   {\small
          \begin{align*}
          \mm(\lambda)&=\frac{2}{|\lambda|^2}\Diag(-\alp_1,-\alp_1,-\alp_2,-\alp_2,\ldots,-\alp_n,-\alp_n,\alp_1+\alp_2+\ldots+\alp_n)\\
          &= \frac{2}{|\lambda|^2}(\alp_1 F_{12}^{2n+1}+\alp_2 F_{34}^{2n+1}+\ldots +\alp_n F_{2n-1 2n}^{2n+1}).
          \end{align*}
          }
   And from \eqref{def cono}, $D$ belongs to $\Cone(\mathfrak{h}_{2n+1})$ if and only if there exist $a_1,a_2,\ldots, a_n >0$ and $ E \in \Diag(\mathfrak{h}_{2n+1})_{>0}$ such that
   $$ D=a_1F_{12}^{2n+1}+a_2F_{34}^{2n+1}+\ldots +a_nF_{2n-12n}^{2n+1} + E.$$
   We therefore obtain
   \begin{equation*}
               \left\{
             \begin{aligned}
              d_1+a_1& >0,  \\
              d_{n+1}-d_1+a_1& >0,\\
              d_2+a_2& >0,  \\
              d_{n+1}-d_2+a_2& >0,\\
              \end{aligned}
              \right.
              \qquad \qquad
              \left\{
              \begin{aligned}
              \vdots \\
              d_n+a_n& >0,  \\
              d_{n+1}-d_n+a_n& >0,\\
              d_{n+1}-a_1-a_2-\ldots - a_n& >0.\\
            \end{aligned}
            \right.
   \end{equation*}
  In order to obtain the equations defining $\Cone(\mathfrak{h}_{2n+1})$, we note that $d_{n+1}>0$ and there exists $a_1>0$ such that
  $$d_{n+1}-a_2-\ldots -a_n > a_1> -d_1, \quad d_1-d_{n+1}$$
  This means that the last system of inequalities implies $d_{n+1}+d_1>0$ and $2d_{n+1}-d_1>0$, the same is valid for all $d_i$ with $i=1,\ldots n$ if we replace $a_1>0$ by $a_i>0$.
  After this is done, we can see that there is $a_2>0$ such that
  $$ d_1+d_{n+1}-a_3 -\ldots -a_n > a_2 > -d_2, \quad d_2-d_{n+1},$$
  $$ 2d_{n+1}-d_1-a_3- \ldots - a_n > a_2 > -d_2,\quad d_2-d_{n+1},$$
  these inequalities implies $d_{n+1}+d_1+d_2>0$, $2d_{n+1}+d_1-d_2>0$, $2d_{n+1}-d_1+d_2>0$ and $3d_{n+1}-d_1-d_2>0$. As we do before, we can replace $d_1,d_2$ by $d_i,d_j$ $i\neq j$ and those equations are implied from the system too. We also have that,
  $$ d_{n+1}+d_1+d_2-a_3-\ldots -a_n > 0, \qquad \qquad 2d_{n+1}+d_1-d_2-a_3-\ldots -a_n >0,$$
  $$ 2d_{n+1}-d_1+d_2-a_3- \ldots- a_n >0, \qquad \qquad 3d_{n+1}-d_1-d_2-a_3- \ldots -a_n>0,$$
  and if we continue in the same manner with $a_3>0$ or other one, until we get inequalities with no positive constants involved we obtain $3^n-1$ inequalities, these and $d_{n+1}>0$ define the cone $\Cone(\mathfrak{h}_{2n+1})$, and this is precisely the assertion of the proposition.
\end{proof}

Now, we are going to study the polytope $\Cone_p(\mathfrak{h}_{2n+1}):=\Cone(\mathfrak{h}_{2n+1})\cap \{\tr(D)=p\}$ in low dimensions.

\subsection {$\Cone_3(\mathfrak{h}_{5})$}
  In $\mathfrak{h}_5$ we have $\tgo(\mathfrak{h}_5)$ defined by $D=\Diag(d_1,d_3-d_1,d_2,d_3-d_2,d_3),$ where $d_1,d_2,d_3 \in \RR$. As $\tr(D)=3d_3$, if we consider $\Cone_3(\mathfrak{h}_5)$, we have $d_3=1$ and it follows from Proposition \ref{prop heis} that the equations of $\Cone_3(\mathfrak{h}_5)$ are:
   \begin{equation*}
             \begin{aligned}
              1+d_1 & >0,\\
              1+d_2 & >0,\\
              2-d_1 & >0,\\
              2-d_2 & >0,
            \end{aligned}
              \qquad \qquad
              \begin{aligned}
              1+d_1+d_2& >0,  \\
              2+d_1-d_2& >0,\\
              2-d_1+d_2& >0,  \\
              3-d_1-d_2& >0.\\
            \end{aligned}
    \end{equation*}

    In Figure~\ref{figh5} we can see a representation of $\Cone_3(\mathfrak{h}_5).$

  \subsection { $\Cone_4(\mathfrak{h}_{7})$}
   In this case we have that $\tgo(\mathfrak{h}_7)$ is defined by
  $D=\Diag(d_1,d_4-d_1,d_2,d_4-d_2,d_3,d_4-d_3,d_4)$ with $d_1,d_2,d_3,d_4 \in \RR$. Analogously to the case of dimension $5$, as $\tr(D)=4d_4$, if we consider $\Cone_4(\mathfrak{h}_7)$ we obtain $d_4=1$.

  Using Proposition~\ref{prop heis}, we have that $\Cone_4(\mathfrak{h}_7)$ is given by 26 inequalities, which are given below.
  \begin{equation*}
             \begin{aligned}
              1+d_1 & >0,\\
              2-d_1 & >0,\\
              1+d_1+d_2& >0,  \\
              2+d_1-d_2& >0,\\
              2-d_1+d_2& >0,  \\
              3-d_1-d_2& >0,\\
            \end{aligned}
              \qquad \qquad
              \begin{aligned}
              1+d_2 & >0,\\
              2-d_2 & >0,\\
              1+d_2+d_3& >0,  \\
              2+d_2-d_3& >0,\\
              2-d_2+d_3& >0,  \\
              3-d_2-d_3& >0,\\
              \end{aligned}
            \qquad \qquad
              \begin{aligned}
               1+d_3 & >0,\\
              2-d_3 & >0,\\
              1+d_1+d_3& >0,  \\
              2+d_1-d_3& >0,\\
              2-d_1+d_3& >0,  \\
              3-d_1-d_3& >0,\\
            \end{aligned}
    \end{equation*}
\newline
    \begin{equation*}
             \begin{aligned}
              1+d_1+d_2+d_3& >0,  \\
              2-d_1+d_2+d_3& >0,\\
              2+d_1-d_2+d_3& >0,  \\
              2+d_1+d_2-d_3& >0,\\
            \end{aligned}
              \qquad \qquad
              \begin{aligned}
              3-d_1-d_2+d_3& >0,  \\
              3-d_1+d_2-d_3& >0,\\
              3+d_1-d_2-d_3& >0,  \\
              4-d_1-d_2-d_3& >0.\\
            \end{aligned}
    \end{equation*}
These equations define a $26$-sided polygon with $24$ vertices , see Figure~\ref{figh7}.
\begin{paracol}{2}
\begin{figure}[h!]
\begin{tikzpicture}[scale=0.8]
         \draw[very thin,color=gray,step=1cm] (-2.2,-2.2) grid (4.2,3.2);
         \foreach \x in {-2,-1,0,1,2,3,4}
         \draw (\x cm,1pt) -- (\x cm,-1pt) node[anchor=north] {$\x$};
         \foreach \y in {-2,-1,1,2}
         \draw (1pt,\y cm) -- (-1pt,\y cm) node[anchor=east] {$\y$};

         \draw[->] (-2,0) -- (3.8,0) node[right] {$d_1$};
         \draw[->] (0,-2) -- (0,3) node[above] {$d_2$};

         \draw[very thick, red, fill, opacity=0.2]   (2,1)-- (2,0)--(1,-1)--(0,-1)--(-1,0)--(-1,1)--(0,2)--(1,2)--(2,1);
         \draw[dashed,very thick, red]  (2,1)-- (2,0)--(1,-1)--(0,-1)--(-1,0)--(-1,1)--(0,2)--(1,2)--(2,1);

         \draw[color=red, fill=red] (2,1) circle (1.5pt) node[right] {$(2,1)$};
         \draw[color=red, fill=red] (2,0) circle (1.5pt) node[right] {$(2,0)$};
         \draw[color=red, fill=red] (1,-1) circle (1.5pt) node[right] {$(1,-1)$};
         \draw[color=red, fill=red] (0,-1) circle (1.5pt) node[below] {$(0,-1)$};
         \draw[color=red, fill=red] (-1,0) circle (1.5pt) node[left] {$(-1,0)$};
         \draw[color=red, fill=red] (-1,1) circle (1.5pt) node[left] {$(-1,1)$};
         \draw[color=red, fill=red] (0,2) circle (1.5pt) node[above] {$(0,2)$};
         \draw[color=red, fill=red] (1,2) circle (1.5pt) node[right] {$(1,2)$};

         \draw[color=red, fill=red] (0.5,0.5) circle (1.5pt);
\end{tikzpicture}
         \caption{$\Cone_3(\mathfrak{h}_5)$}
         \label{figh5}
\end{figure}
\switchcolumn
\begin{figure}[h!]
        \includegraphics[width=4.8cm, height=4.8cm]{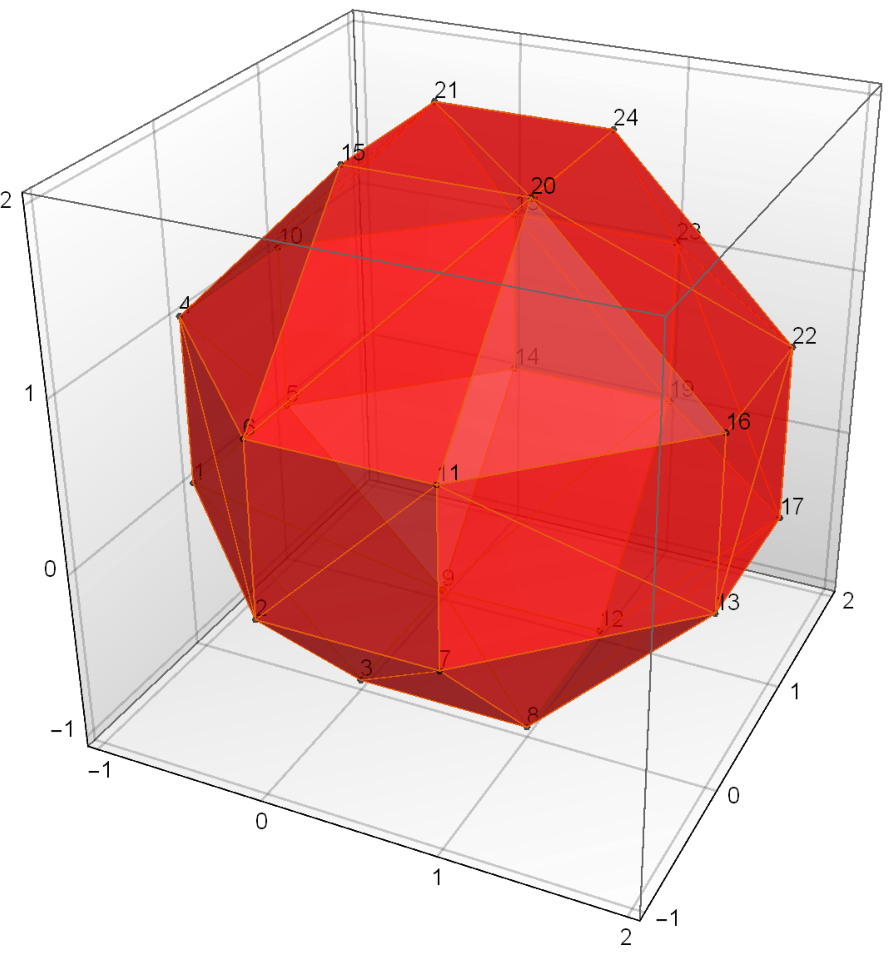}
\caption {$\Cone_4(\mathfrak{h}_7)$}
\label{figh7}
\end{figure}
\end{paracol}

Results of \cite{NklNkn} enables us to say that Conjecture 1 holds for Heisenberg Lie algebra of dimension $2n+1$.  Let $\ggo$ be a
solvable Lie algebra with the nilradical $\mathfrak{h}_{2n+1}$.  For any $X\in\ggo$, the vector $e_{2n+1}$ is an eigenvector of the restriction of $\ad_X$ to $\mathfrak{h}_{2n+1}$, so that $[X,e_{2n+1}]=\lambda(X)e_{2n+1}$ for a one-form $\lambda$ on $\ggo$, and moreover, $(\ad_X)_{|\mathfrak{h}_{2n+1}}$  descends to a well-deﬁned linear map $\Phi(X)\in\End( \mathfrak{h}_{2n+1}/\zg)$. Let $d_i(X)\in\CC, \ i=1,\ldots,2n$ be the eigenvalues of $\Phi(X)$, each listed with its algebraic multiplicity.  In \cite[Theorem 3]{NklNkn} it is proved that the cone $\tgo(\mathfrak{h}_{2n+1})_{srn}$ is given by
$$\lambda(Y)+\sum_{i: \ Re\ d_i(Y)<0}Re\ d_i >0.$$
In our notation, if a diagonal derivation of $\mathfrak{h}_{2n+1}$ is given by $D=\Diag(d_1,d_{n+1}-d_1,d_2,d_{n+1}-d_2,\ldots,d_n,d_{n+1}-d_n,d_{n+1})$, this means that $$d_{n+1}+\sum_{i:\ d_i<0}d_i+\sum_{j:\ d_{n+1}<d_j}(d_{n+1}-d_j) >0$$ are the equations defining $\tgo(\mathfrak{h}_{2n+1})_{srn}$, we claim that these are the same equations which define $\Cone(\mathfrak{h}_{2n+1})$.
\begin{theorem}
  $\Cone(\mathfrak{h}_{2n+1})=\tgo(\mathfrak{h}_{2n+1})_{srn}, \text{ for all } n\in\NN$
\end{theorem}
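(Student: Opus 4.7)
The plan is to deduce the theorem by directly comparing the $3^n$ defining inequalities of $\Cone(\mathfrak{h}_{2n+1})$ from Proposition~\ref{prop heis} with the single Nikolayevsky--Nikonorov (NN) inequality characterizing $\tgo(\mathfrak{h}_{2n+1})_{srn}$ recalled just above the statement. Since the inclusion $\Cone(\mathfrak{h}_{2n+1})\subset\tgo(\mathfrak{h}_{2n+1})_{srn}$ is already known from Section~\ref{opencone}, the task reduces to showing that every $D\in\tgo(\mathfrak{h}_{2n+1})_{srn}$ satisfies all $3^n$ inequalities from Proposition~\ref{prop heis}.

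Given $D=\Diag(d_1,d_{n+1}-d_1,\ldots,d_n,d_{n+1}-d_n,d_{n+1})$, I would first set $A=\{i:d_i<0\}$ and $B=\{j:d_{n+1}<d_j\}$. Since the trace condition $(n+1)d_{n+1}>0$ forces $d_{n+1}>0$, one has $A\cap B=\emptyset$, and the NN inequality rewrites as
\begin{equation*}
(|B|+1)d_{n+1}+\sum_{i\in A}d_i-\sum_{j\in B}d_j>0.
\end{equation*}
I would then observe that this is \emph{exactly} one of the $3^n$ $\Cone$ inequalities, namely the one associated to the subset $A\cup B\subset\{1,\ldots,n\}$ with $+$ signs on $A$ and $-$ signs on $B$, so that $k=|A|+|B|$ and $l=|B|$.

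The key step is to verify that this NN inequality has the smallest left-hand side among the $3^n$ inequalities of Proposition~\ref{prop heis}. Each $\Cone$ inequality is determined by, for every index $i\in\{1,\ldots,n\}$, one of three choices: omit $i$ (contributing $0$), include it with $+$ sign (contributing $d_i$), or include it with $-$ sign (contributing $d_{n+1}-d_i$, since the increment to the coefficient of $d_{n+1}$ is absorbed into this term). Minimizing per index, the minimum of $\{0,d_i,d_{n+1}-d_i\}$ equals $d_i$ exactly when $d_i<0$, equals $d_{n+1}-d_i$ exactly when $d_i>d_{n+1}$, and equals $0$ otherwise, which reproduces precisely the NN sign choice. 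Hence if the NN LHS is positive, every $\Cone$ LHS is at least as large and therefore positive, giving $D\in\Cone(\mathfrak{h}_{2n+1})$.

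The main obstacle is conceptual rather than technical: one must recognize that among the large family of $3^n$ inequalities defining $\Cone$, the single NN condition picks out precisely the worst-case sign pattern. Once this is observed, the remaining verification is a routine three-way case analysis on the position of $d_i$ relative to $0$ and $d_{n+1}$, together with the trace-positivity argument ensuring $A\cap B=\emptyset$.
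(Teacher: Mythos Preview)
Your proposal is correct and follows essentially the same route as the paper: both invoke the Nikolayevsky--Nikonorov characterization of $\tgo(\mathfrak{h}_{2n+1})_{srn}$ and match it against the $3^n$ inequalities of Proposition~\ref{prop heis}. Your argument is in fact slightly more explicit than the paper's: after using $W_{ort}$-invariance to order the $d_i$, the paper merely identifies the NN inequality as one particular Cone inequality (with $k=r+n-s$, $l=n-s$), whereas you spell out the per-index minimization $\min\{0,d_i,d_{n+1}-d_i\}$ showing that this is the \emph{smallest} left-hand side among all $3^n$, which is exactly the step needed to conclude that all the remaining Cone inequalities hold.
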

\begin{proof}
Given $D=\Diag(d_1,d_{n+1}-d_1,d_2,d_{n+1}-d_2,\ldots,d_n,d_{n+1}-d_n,d_{n+1})\in\tgo(\mathfrak{h}_{2n+1})$, if $D$ is strongly Ricci negative then $d_{n+1}>0$ by the necessary condition of \cite[Theorem 2]{NklNkn}. Then, if $D\in\tgo(\mathfrak{h}_{2n+1})_{srn}$ each $d_i\in\RR$ has three options, being less than $0$, between (or equal) $0$ and $d_{n+1}$ or greater than $d_{n+1}$, and as the cone is $W_{ort}$-invariant we can assume that from $i=1\ldots, r$, $d_i<0$, $0\leq d_i\leq d_{n+1}$ if $i=r+1,\ldots, s,$ and if $i=s+1,\ldots, n$ then $d_i>d_{n+1}$. In this case, following \cite[Theorem 3]{NklNkn}, we have
$$d_{n+1}+d_1+\ldots+d_r+(n-s)d_{n+1}-d_{s+1}-\ldots-d_n>0.$$
And if we take $k=r+n-s$ and $l=n-s$ we have the same equation in $\Cone(\mathfrak{h}_{2n+1})$ by \eqref{conoheis}.
\end{proof}

  \section{Filiform Lie algebra $L_n$}\label{fil}
  We now calculate $\Cone(L_n)$ for all $n \in \NN$, and we compare these results with \cite{NklNkn}.

\begin{proposition}
  Let $L_n$ be the filiform Lie algebra defined by the basis $\{e_1,e_2,\ldots, e_{n}\}$ and the Lie brackets:
  $$ \mu(e_1,e_2)=e_{3}, \quad \mu(e_1,e_3)=e_{4}, \quad \dots, \quad \mu(e_{1},e_{n-1})=e_{n}.$$ For $ D:= \Diag(d_1,d_2,d_1+d_2,2d_1+d_2,\ldots,(n-2)d_1+d_2) \in \tgo(L_n)$, the cone $\Cone(L_n)$ is defined by the following equations,
  $$(n-2)d_{1}+d_2 >0 \qquad \text{and} \qquad  \frac{(n-1)(n-2)}{2}d_1+(n-1)d_2 >0. $$
\end{proposition}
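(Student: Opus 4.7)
The argument will follow the template used for $\ngo_1$ in Section \ref{conosdim5} and for the Heisenberg algebras in Proposition \ref{prop heis}. A diagonal derivation with pairwise distinct weights is generic, and $\tgo(L_n)$ is multiplicity-free, so $G_{\tgo(L_n)}=\Diag(L_n)_{>0}$ and $\overline{G_{\tgo(L_n)}\cdot\mu}$ is parametrized by nonnegative scalars via $\lambda(e_1,e_i)=\alpha_i e_{i+1}$, $\alpha_i\geq 0$, for $i=2,\dots,n-1$. Since these structure constants occupy disjoint index positions, \eqref{defmm2} gives $\mm(\lambda)=\tfrac{1}{|\lambda|^2}\sum_{i=2}^{n-1}\alpha_i^2 F_{1i}^{i+1}$, and therefore $\mm(\overline{G_{\tgo(L_n)}\cdot\mu})\cap\Diag(L_n)=\CH(F_{12}^3,F_{13}^4,\dots,F_{1,n-1}^n)$.

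By \eqref{def cono}, $D\in\Cone(L_n)$ is equivalent to the solvability, in $a_2,\dots,a_{n-1}>0$, of the system $E_1:=d_1+\sum_{i=2}^{n-1}a_i>0$, $E_2:=d_2+a_2>0$, $E_j:=(j-2)d_1+d_2+a_j-a_{j-1}>0$ for $3\leq j\leq n-1$, and $E_n:=(n-2)d_1+d_2-a_{n-1}>0$. Necessity of the two stated inequalities is then immediate: the first follows from $E_n>0$ combined with $a_{n-1}>0$; summing $E_2+E_3+\cdots+E_n$ causes all of the $a_i$ to telescope, leaving exactly $\tfrac{(n-1)(n-2)}{2}d_1+(n-1)d_2>0$.

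For sufficiency I would give an explicit feasible point. Set $T_k:=\sum_{j=k+1}^{n}((j-2)d_1+d_2)=\tfrac{(n+k-3)(n-k)}{2}d_1+(n-k)d_2$ and try $a_k:=T_k-\delta(n-k)$ for sufficiently small $\delta>0$. Substituting in, the identity $T_{k-1}-T_k=(k-2)d_1+d_2$ makes the $f$-terms cancel and gives $E_j=\delta$ for every $j\in\{3,\dots,n\}$, while $E_2$ and the positivity $a_k>0$ become small perturbations of the second stated inequality and of $T_k>0$ for every $k$, respectively. Since $T_k/(n-k)=\tfrac{n+k-3}{2}d_1+d_2$ has its slope in $[\tfrac{n-1}{2},n-2]$, which lies in the convex hull of the slopes $n-2$ and $\tfrac{n-2}{2}$ of the two stated inequalities (after dividing the second one by $n-1$), one concludes $T_k>0$ for all $k$.

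The only nontrivial verification is $E_1>0$, which I expect to be the main obstacle. Substituting the choice above gives, up to an $O(\delta)$ term, the form $\bigl(1+\tfrac{(n-2)(n-1)(2n-3)}{6}\bigr)d_1+\tfrac{(n-2)(n-1)}{2}d_2$, which must be shown to be positive under the two stated hypotheses. Solving a $2\times 2$ linear system expresses this form as $\alpha\cdot[(n-2)d_1+d_2]+\beta\cdot[\tfrac{(n-2)(n-1)}{2}d_1+(n-1)d_2]$ with $\alpha=\tfrac{n(n-1)}{6}+\tfrac{2}{n-2}$ and $\beta=\tfrac{n-3}{3}-\tfrac{2}{(n-1)(n-2)}$, both nonnegative for $n\geq 4$ (with $\beta=0$ only in the limiting case $n=4$, where $\alpha>0$ still does the job) and both strictly positive for $n\geq 5$. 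This yields $E_1>0$ and completes the proof for $n>3$, which matches the range stated in the subsequent corollary.
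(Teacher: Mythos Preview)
Your proof is correct, but it proceeds by a genuinely different route than the paper's. The paper performs a Fourier--Motzkin style elimination: it successively removes $a_{n-1},a_{n-2},\dots,a_2$, at each stage recording the inequalities required for a solution to exist, and ends with a long list of linear inequalities in $(d_1,d_2)$; a slope comparison (showing all intermediate slopes lie in $[\tfrac{n-2}{2},n-2]$) then identifies the two extremal ones as the only binding constraints. You instead split into necessity and sufficiency. Your necessity argument is slicker than the paper's: the telescoping sum $E_2+\cdots+E_n$ pins down the second inequality in one line, whereas the paper recovers it only after the full elimination. For sufficiency you supply an explicit feasible point $a_k=T_k-\delta(n-k)$; this is more constructive than the paper's existential elimination, but it forces you to handle $E_1$ separately, which you do via the nonnegative combination $\alpha\,[(n-2)d_1+d_2]+\beta\,[\tfrac{(n-1)(n-2)}{2}d_1+(n-1)d_2]$ with the coefficients you compute (and your values $\alpha=\tfrac{n(n-1)}{6}+\tfrac{2}{n-2}$, $\beta=\tfrac{n-3}{3}-\tfrac{2}{(n-1)(n-2)}$ check out, including the borderline $\beta=0$ at $n=4$). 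Both arguments ultimately rest on the same convexity/slope observation, but yours packages it more directly; the paper's has the advantage of following the uniform recipe used in all the other cases of Sections~\ref{conosdim5} and~\ref{Heis}.
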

\begin{proof}
  Let $L_{n}$ be the  filiform Lie algebra and let $ D= \Diag(d_1,d_2,d_1+d_2,2d_1+d_2,\ldots,(n-2)d_1+d_2) \in \tgo(L_n).$
 Note that $ \tr(D)=(\sum_{i=1}^{n-2}i+1)d_1+(n-1)d_2$ and $D|_{\zg(L_{n})}=(n-2)d_1+d_2$.

 $D$ is generic if and only if:
  $$d_1,d_2 \neq 0, \quad d_1 \neq d_{2}, \quad d_2 \neq -jd_1 \quad \text{for all} \ j=1 \ldots n-3. $$

 In this case $D$ has pairwise different eigenvalues, this implies that $\tgo(L_{n})$ is multiplicity free, $G_{\tgo(L_{n})}=\Diag(L_n)_{>0}$ and $ \overline{G_{\tgo(L_{n})}\cdot \mu}$ is given by:
  $$ \lambda(e_1,e_i)=\alpha_i e_{i+1} \quad \text{for all} \ i=2 \ldots n-1,$$
  where $\alpha_i \geq 0. $
  From~\eqref{defmm2}, we have that
   {\small
          \begin{align*}
          \mm(\lambda)&=\frac{2}{|\lambda |^2}\Diag(-\alp_2-\ldots-\alp_{n-1},-\alp_2,\alp_2-\alp_3,\alp_3-\alp_4,\ldots,\alp_{n-2}-\alp_{n-1},\alp_{n-1})\\
          &= \CH(F_{1i}^{i+1} \quad  \ i=2 \ldots n-1).
          \end{align*}
          }%
    By \eqref{def cono}, a derivation $D$ is in $\Cone(L_{n})$ if and only if there exist $a_2,a_3,\ldots, a_{n-1} >0$ and $ E \in \Diag(L_{n})_{>0}$ such that
   $$ D=\sum_{i=2}^{n-1}a_iF_{1i}^{i+1} + E.$$
   That is,
   \begin{equation*}
               \left\{
             \begin{aligned}
              d_1+a_2+\ldots+a_{n-1}& >0,  \\
              d_{2}+a_2& >0,\\
              (d_1+d_2)-a_2+a_3& >0,  \\
              \end{aligned}
              \right.
              \qquad\qquad
              \left\{
              \begin{aligned}
              (2d_1+d_2)-a_3+a_4& >0,\\
              \vdots \\
              (n-3)d_1+d_2-a_{n-2}+a_{n-1}& >0,  \\
              (n-2)d_1+d_2-a_{n-1}& >0.\\
            \end{aligned}
            \right.
   \end{equation*}
   This holds if and only if there exists $a_{n-1}>0$ such that
  \begin{align*}
    (n-2)d_{1}+d_2 > a_{n-1}> & -(n-3)d_1-d_2+a_{n-2}, \\
    &  -d_1-a_2-\ldots-a_{n-2},
  \end{align*}
  if and only if there exists $a_{n-2}>0$ such that
   \begin{align*}
    [(n-2)+(n-3)]d_1+2d_2> a_{n-2} > & -(n-4)d_1-d_2+a_{n-3}, \\
    &  [-1-(n-2)]d_1-d_2-a_2-\ldots-a_{n-3}.
  \end{align*}
  In the same manner we can see that there exists $a_{n-3}>0$ such that
   \begin{align*}
    [(n-2)+(n-&3)+(n-4)]d_1+3d_2 > a_{n-3} >  -(n-5)d_1-d_2+a_{n-4}, \\
    &  [-1-2(n-2)-(n-3)]d_1-(1+2)d_2-a_2-\ldots-a_{n-4}.
  \end{align*}
 If we apply this argument over and over again, we obtain that there exists $a_3>0$ such that
   $$[(n-2)+(n-3)+(n-4)+\ldots+3+2]d_1+(n-3)d_2> a_{3} >  -d_1-d_2+a_{2},$$
    $$ [-1-(n-4)(n-2)-(n-5)(n-3)-(n-6)(n-4)-\ldots-3]d_1-(1+2+3+\ldots+(n-4))d_2-a_2,$$
   and there exists $a_2>0$ such that
   $$ \left[\sum_{i=2}^{n-1}(n-i)\right]d_{1}+(n-2)d_2 > a_{2}>  -d_2, \ \left[-1-\sum_{i=2}^{n-2}(i-1)i \right]d_1-\left(\sum_{i=1}^{n-3}i \right)d_2. $$
  Hence,
  $$\left[\sum_{i=2}^{n-1}(n-i)\right]d_{1}+(n-1)d_2>0 \qquad \text{and} $$ $$\left[1+\sum_{i=2}^{n-2}(i-1)i+\sum_{i=2}^{n-1}(n-i)\right]d_1+\left[\frac{(n-3)(n-2)}{2}+(n-2)\right]d_2>0$$
  Since,
  $$\sum_{i=2}^{n-1}(n-i)=\tfrac{(n-1)(n-2)}{2} \quad \text{and} \quad    \sum_{i=2}^{n-2}(i-1)i=\tfrac{n}{6}(2n^2-9n+13)-1-\tfrac{(n-2)(n-1)}{2},$$
 the previous system is equivalent to the next one:
 \small{
    \begin{equation*}
               \left\{
             \begin{aligned}
              (n-2)d_{1}+d_2& >0  \\
             [(n-2)+(n-3)]d_1+2d_2& >0\\
              [(n-2)+(n-3)+(n-4)]d_1+3d_2& >0  \\
              \vdots \\
              [(n-2)+(n-3)+\ldots+2]d_1+(n-3)d_2& >0  \\
              \end{aligned}
             \right.
             \qquad \qquad
             \left\{
             \begin{aligned}
              \left[\sum_{i=2}^{n-1}(n-i)\right]d_{1}+(n-2)d_2& >0\\
              \tfrac{(n-1)(n-2)}{2}d_1+(n-1)d_2& >0 \\
              \tfrac{n}{6}(2n^2-9n+13)d_1+\tfrac{(n-1)(n-2)}{2}d_2 & >0 \\
            \end{aligned}
            \right.
   \end{equation*}
}
We claim that
$$n-2 \geq\tfrac{\sum_{i=2}^{j}(n-i)}{j-1}\geq \tfrac{n-2}{2} \quad \text{for all} \ n>j$$
$$ n-2\geq \tfrac{n(2n^2-9n+13)}{3(n-1)(n-2)} \geq \tfrac{n-2}{2} \quad \text{for all} \ n>3.$$

Indeed,
\begin{equation*}
  \tfrac{\sum_{i=2}^{j}(n-i)}{j-1}= n - \tfrac{j^2+j}{2(j-1)}+\tfrac{1}{j-1} \geq \tfrac{n-2}{2} \ \text{if and only if} \ n \geq j, \ \text{and}
\end{equation*}
\begin{equation*}
  n-2\geq n - \tfrac{j^2+j-2}{2(j-1)} \ \text{if and only if} \ 2  \leq \tfrac{j+2}{2},\ \text{i.e. it holds for all} \  j\geq2.
\end{equation*}
The other inequalities are equivalent to prove:
$$ n^3 -6n^2 +11n-12 \geq0 \ \text{and} \ \tfrac{1}{2}n^3+\frac{3}{2}n^2-n-6 \geq 0  \quad \text{for all} \ n>3.$$
If we consider those as real cubic functions, $f(4)\geq0$ in both cases and the functions are increasing from $x=4$. So, the proof is complete (see Figure~\ref{conefil}).
\end{proof}

According to \cite{NklNkn} these equations define $\tgo(L_n)_{srn}$, therefore Conjecture 1 is valid in this case.
\begin{corollary}
  $\Cone(L_n)=\tgo(L_n)_{srn}$, for all $n>3$.
\end{corollary}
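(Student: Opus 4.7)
The inclusion $\Cone(L_n)\subset\tgo(L_n)_{srn}$ is already provided by the general Proposition of Section~\ref{opencone}, so the only thing left is the reverse inclusion $\tgo(L_n)_{srn}\subset\Cone(L_n)$. The plan is to appeal to \cite[Theorem 4]{NklNkn}, which characterizes strongly Ricci negative diagonal derivations of a standard filiform Lie algebra by an explicit finite system of linear inequalities in the eigenvalues, and then to match this system with the two-inequality description of $\Cone(L_n)$ obtained in Proposition 6.1.

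To carry this out, I would first write a general diagonal derivation in the parametrization $D=\Diag(d_1,d_2,d_1+d_2,\ldots,(n-2)d_1+d_2)$ used throughout Section~\ref{fil}, and list its eigenvalues on $L_n$: the isolated value $d_1$ together with the arithmetic progression $d_2,\,d_1+d_2,\,2d_1+d_2,\,\ldots,\,(n-2)d_1+d_2$ with common difference $d_1$. Plugging these into \cite[Theorem 4]{NklNkn} produces one linear inequality per admissible sign pattern of the eigenvalues. Because the progression is arithmetic, its sign changes at most once as the index varies, so only a finite family of nested inequalities can actually occur. The extremal ones in this family should be exactly $(n-2)d_1+d_2>0$, corresponding to positivity on the center $\zg(L_n)=\RR e_n$, and $\tfrac{(n-1)(n-2)}{2}d_1+(n-1)d_2>0$, corresponding to positivity of the trace of $D$ on $\spann\{e_2,\ldots,e_n\}$.

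All remaining inequalities produced by the NklNkn criterion will then be implied by these two extremes via the same type of elementary reduction already performed at the end of the proof of Proposition 6.1, where the intermediate family $\sum_{i=2}^{j}(n-i)d_1+(j-1)d_2>0$ was shown to follow from the two extreme ones. Combining, we obtain $\tgo(L_n)_{srn}\subset\Cone(L_n)$, and equality follows.

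The argument carries no deep geometric step beyond the invocation of \cite[Theorem 4]{NklNkn}, so the main difficulty will be purely combinatorial: correctly transcribing that theorem into our parametrization, organizing the sign patterns of the arithmetic progression, and verifying that no inequality other than the two extremes is needed. A secondary concern is the degenerate case $d_1=0$, where the monotonicity of signs in the progression is trivial and must be handled separately, but this should reduce directly to the trace and center conditions.
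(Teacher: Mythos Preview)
Your proposal is correct and takes essentially the same approach as the paper: invoke the Nikolayevsky--Nikonorov characterization of $\tgo(L_n)_{srn}$ and match it against the description of $\Cone(L_n)$ from Proposition~6.1. The paper's proof is in fact a single sentence, since \cite{NklNkn} already states that $\tgo(L_n)_{srn}$ is cut out by precisely the two inequalities $(n-2)d_1+d_2>0$ and $\tfrac{(n-1)(n-2)}{2}d_1+(n-1)d_2>0$; hence the combinatorial reduction of a larger family of sign-pattern inequalities that you outline is unnecessary, though harmless.
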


\begin{figure}[h!]
\begin{tikzpicture}[scale=0.8]
  %[domain=-4:3]
\draw[very thin,color=gray,step=1cm] (-2.2,-4.5) grid (4.5,5.5);
\foreach \x in {-1,0,1,2,3,4,5}
   \draw (\x cm,1pt) -- (\x cm,-1pt) node[anchor=north] {$\x$};
\foreach \y in {0,1,2,3,4,5}
   \draw (1pt,\y cm) -- (-1pt,\y cm) node[anchor=east] {$\y$};

\draw[->] (-3,0) -- (6,0) node[right] {$d_1$};
\draw[->] (0,-2) -- (0,5) node[above] {$d_2$};

\draw[dashed, very thick, blue]  (0,0) -- (-1,5);
\draw[dashed, very thick, blue]  (0,0) -- (2,-3.75);

\draw[orange, fill, opacity=0.2]  (-1,5) .. controls (4,6) .. (2,-3.75) -- (0,0) -- (-1,5);

\draw (3.8,1.5) node[color=orange, thick] {$\Cone(L_n)$};%_\ngo_1}$};

\draw (2.5,-4) node[color=blue,very thick] {$d_2=-\frac{n-2}{2}d_1$};
\draw (-2.2,5.3) node[color=blue,very thick] {$d_2=-(n-2)d_1$};

\end{tikzpicture}
\caption{$\Cone(L_n)$}
\label{conefil}
\end{figure}
\newpage

\textit{Acknowledgements.}  I would like to thank to my Ph.D. advisor Dr. Jorge Lauret for his continued guidance during the preparation of this paper.

\end{document}